\newtheorem{theorem}{Theorem}[section]
\newtheorem{lemma}[theorem]{Lemma}
\newtheorem{proposition}[theorem]{Proposition}
\newtheorem{corollary}[theorem]{Corollary}
\theoremstyle{definition}
\newtheorem{definition}[theorem]{Definition}
\newtheorem*{definition-nono}{Definition}
\newtheorem{example}[theorem]{Example}
\newtheorem{remark}[theorem]{Remark}
\newtheorem*{acknowledgement}{Acknowledgement}
\newcommand{\N}{\mathbb{N}}
\newcommand{\Z}{\mathbb{Z}}
\newcommand{\R}{\mathbb{R}}
\newcommand{\mc}{\mathcal}
\newcommand{\mf}{\mathfrak}
\renewcommand{\a}{\alpha}
\renewcommand{\b}{\beta}
\newcommand{\g}{\gamma}
\newcommand{\G}{\Gamma}
\renewcommand{\d}{\delta}
\newcommand{\e}{\varepsilon}
\renewcommand{\l}{\lambda}
\newcommand{\s}{\sigma}
\newcommand{\vp}{\varphi}
\renewcommand{\th}{\theta}
\newcommand{\set}[1]{\left\{#1\right\}}
\renewcommand{\r}{\rightarrow}
\def\multiset#1#2{\ensuremath{\left(\kern-.3em\left(\genfrac{}{}{0pt}{}{#1}{#2}\right)\kern-.3em\right)}}
\newcommand{\norm}[1]{\left\lVert#1\right\rVert}
\numberwithin{equation}{section}
\subjclass[2010]{37A30, and 22F30}
\keywords{Pointwise Equidistribution, Birkhoff's Theorem, Ratner's Theorem}
\title[Pointwise Ergodic Theorems and Translated Measures]{Pointwise Equidistribution and Translates of Measures on Homogeneous Spaces}
\author{Osama Khalil}
\address{Department of Mathematics, The Ohio State University, Columbus, OH 43210}
\email{khalil.37@osu.edu}
\date{}
\begin{document}

\begin{abstract}
	Let $(X,\mf{B},\mu)$ be a Borel probability space.
    Let $T_n: X\r X$ be a sequence of continuous transformations on $X$.
    Let $\nu$ be a probability measure on $X$ such that
    $\frac{1}{N}\sum_{n=1}^N (T_n)_\ast\nu \r \mu$ in the weak-$\ast$ topology.
    Under general conditions, we show that for $\nu$ almost every $x\in X$, the measures $\frac{1}{N}\sum_{n=1}^N \d_{T_n x}$ get equidistributed towards $\mu$ if $N$ is restricted to a set of full upper density.
    We present applications of these results to translates of closed orbits of Lie groups on homogeneous spaces.
    As a corollary, we prove equidistribution of exponentially sparse orbits of the horocycle flow on quotients of $SL(2,\R)$, starting from every point in almost every direction.
\end{abstract}

\maketitle
{\hypersetup{linkcolor=black}}

\section{Introduction}

	Many problems in number theory and geometry can be recast in terms of the equidistribution of translates of appropriate measures on quotients of certain Lie groups.
    The general set up of these results is a Borel probability space $(X,\mf{B},\mu)$, a probability measure $\nu$ on $X$ (usually singular with respect to $\mu$) and a sequence of transformations $T_n: X\r X$ such that
    \begin{align} \label{convergence of nu to mu}
    	\frac{1}{N} \sum_{n=1}^N (T_n)_\ast \nu \xrightarrow{N\r\infty} \mu
    \end{align}
    where $(T_n)_\ast \nu$ is the pushforward of $\nu$ under $T_n$ and the convergence is in the weak-$\ast$ topology.
    A natural question is to what extent can one extend such results to describe the behavior of measures of the form
    \begin{align} \label{pointwise}
    	\frac{1}{N} \sum_{n=1}^N \d_{T_n x}
    \end{align}
    for $\nu$-almost every $x$, where $\d_y$ denotes the dirac delta measure at a point $y$.
    
    Recently, an analogous question for flows was addressed by Chaika and Eskin~\cite{EskinChaika} in the context of flat surfaces.
    In that setting, $X$ is some affine submanifold of the moduli space of flat structures on a surface, $\mu$ is a natural affine $SL(2,\R)$ invariant measure, $\nu$ is the measure supported on an orbit of $SO(2)$ and the transformations are of the form $ a(t) = diag(e^t , e^{-t})$.
    They show that for $\nu$ almost every $x$, the empirical measures analogous to those in~\eqref{pointwise} get equidistributed towards $\mu$.
    
    In the context of homogeneous spaces, Shi~\cite{RShi-Pointwise} explored this question for translates of measures supported on (pieces of) orbits of certain horospherical subgroups of Lie groups by one parameter diagonalizable subgroups.
    Here $X$ is a homogeneous space for a Lie group $G$, $\nu$ is a measure on an orbit of a certain horospherical subgroup, and $T_n = T^n$, where $T$ is an Ad-diagonalizable element of $G$.
    Equidistribution of empirical measures towards the natural $G$-invariant Haar measure is proven $\nu$ almost everywhere.
    
    In~\cite{KleinbockShiWeiss}, an effective version of this result is obtained via different methods.
    The convergence of empirical measures analogous to~\eqref{pointwise} is proven for general dynamical systems under the hypothesis of some form of exponential mixing of the transformation $T$ with respect to the non-invariant measure $\nu$.
    
    In all three cases, equidistribution was obtained by exploiting specific properties of the system at hand, while not directly utilizing the fact that~\eqref{convergence of nu to mu} holds.

    \subsection{Statement of Results}
    In this article, we approach this question in the general context of continuous measure preserving transformations, assuming~\eqref{convergence of nu to mu} only.
    We obtain equidistribution results of measures in~\eqref{pointwise} under general conditions yet only along subsequences of full upper density.
    Recall the definition of upper density:
    \begin{definition-nono} \nonumber
		The upper density of a subset $A \subseteq \N$, denoted by $\overline{d}(A)$ is defined to be
        	\begin{eqnarray*}
        		\overline{d}(A) = \limsup_{N\r\infty} \frac{\# (A\cap[1,N])}{N}
        	\end{eqnarray*}
	\end{definition-nono}

    In what follows, $X$ will be a locally compact, second countable topological space and $\mc{B}$ is its Borel $\s$-algebra. A pair $(X,\mc{B})$ will be called a standard Borel space.
    The following is our first main result for the case when translations are done by powers of a single transformation.
    
    \begin{theorem} \label{Intro-Birkhoff along a subsequence}
          Let $(X, \mc{B}, \mu)$ be a standard Borel probability space. Let $T$ be a continuous ergodic measure preserving transformation of $X$. 
          Let $\nu$ be a probability measure on $X$ such that
          \[ \frac{1}{N} \sum_{n=0}^{N-1} T^n_\ast \nu \xrightarrow[N\r\infty]{weak-\ast}\mu\]
          Then, for $\nu$-almost every $x\in X$, there exists a sequence $A(x) \subseteq \N$, of upper density $1$, such that
          for all $\psi \in C_c(X)$,
          \[ \lim_{\substack{N\r\infty \\ N\in A(x)}} 
            \frac{1}{N} \sum_{n=0}^{N-1} \psi(T^nx) = \int \psi \; d\mu \]
      \end{theorem}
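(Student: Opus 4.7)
My plan is to reduce the problem, by second countability of $X$, to checking the conclusion on a countable subfamily $\{\psi_i\} \subset C_c(X)$ that is dense in the sup norm, each of which we may assume is $\mu$-centered. For each such $\psi$ and each $\e > 0$, the goal will be to show that
\[ \liminf_{N\r\infty} \frac{1}{N}\#\{n < N : |S_n\psi(x)| > \e\} = 0 \quad \text{for $\nu$-a.e.\ $x$,} \]
where $S_n\psi(x) = n^{-1}\sum_{j<n}\psi(T^jx)$; this is precisely the condition that $S_N\psi(x) \r 0$ along some upper-density-one subset of $\N$. Quantifying this over the countable family of pairs $(\psi_i, 1/m)$ and applying a diagonal construction will produce a single set $A(x) \subseteq \N$ with $\overline{d}(A(x)) = 1$ along which $S_N\psi_i(x) \r \int\psi_i\,d\mu$ for every $i$, whence for all $\psi \in C_c(X)$ by uniform approximation on compacta.

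The central device I propose is a smoothing trick that dominates $S_N\psi$---which is only controlled on average under $\nu$---by Birkhoff averages of a function that is small in $L^1(\mu)$. For each $k \geq 1$, set $g_k := |S_k\psi|$; Birkhoff's theorem on $(X,\mu,T)$ combined with bounded convergence gives $c_k := \int g_k\,d\mu \r 0$. Swapping the order of summation in a double Birkhoff sum yields the pointwise bound
\[ |S_N\psi(x)| \leq A_{N,k}(x) + O(k/N), \qquad A_{N,k}(x) := \frac{1}{N}\sum_{n=0}^{N-1} g_k(T^n x). \]
Since $\mu$ and $\bar\mu_N := N^{-1}\sum_{n<N}(T^n)_\ast\nu$ are all probability measures on the locally compact second countable space $X$, tightness upgrades the assumed weak-$\ast$ convergence $\bar\mu_N \r \mu$ from $C_c(X)$ to $C_b(X)$, and applied to the bounded continuous $g_k$ this yields $\int A_{N,k}\,d\nu = \int g_k\,d\bar\mu_N \r c_k$. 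Markov's inequality for the indicator of $\{A_{n,k} > \e/2\}$ combined with averaging over $n < N$ would then give, via Fatou's lemma,
\[ \int \liminf_{N\r\infty} \frac{1}{N}\#\{n < N : A_{n,k}(x) > \e/2\}\, d\nu(x) \leq \frac{2c_k}{\e}. \]
Letting $k \r \infty$ forces the integrand to vanish $\nu$-a.e., and the smoothing inequality transfers this to the corresponding lower-density-zero statement for the bad set $\{n : |S_n\psi(x)| > \e\}$.

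The hard part is precisely this transfer from the averaged convergence hypothesis on translates of $\nu$ to quantitative pointwise information along $\nu$-typical orbits, given that $\nu$ may be singular with respect to $\mu$ and so Birkhoff's theorem cannot be applied directly to $\nu$; the device of passing through $g_k$ smuggles in Birkhoff's theorem on $(X,\mu)$ to supply $L^1(\mu)$-smallness, which is then transported back to $\nu$ via weak-$\ast$ convergence together with Markov and Fatou. Once this is in place, the diagonal construction of $A(x)$ is routine: for each $j$, I would pick $N_j$ much larger than $N_{j-1}$ so that fewer than a fraction $1/j^2$ of the indices in $[1,N_j]$ are bad for any of the finitely many pairs $(\psi_i, 1/m)$ with $i,m \leq j$, and take $A(x)$ to consist of the good indices in each window $[N_j, N_{j+1})$; rapid enough growth of $N_j$ ensures $\overline{d}(A(x)) = 1$, and the construction forces $S_N\psi_i(x) \r \int \psi_i\,d\mu$ along $A(x)$ for every $i$, whence for all $\psi \in C_c(X)$.
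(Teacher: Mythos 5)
Your analytic core is sound, and it is a genuinely different route from the paper's. Where the paper proves a shifted weak-type maximal inequality (Proposition~\ref{Weak Maximal Ergodic Theorem}, deduced from the $l^1(\mathbb{Z})$ covering lemma) and then runs a Borel--Cantelli argument over a sequence $\varepsilon_i\to 0$, you exploit the fact that only \emph{upper} density one is required: the smoothing bound $|S_N\psi|\le A_{N,k}+O(k/N)$ with $g_k=|S_k\psi-\mu(\psi)|$, the upgrade of the hypothesis from $C_c$ to $C_b$ (legitimate, since vague convergence of probability measures to a probability measure forbids escape of mass), and then Markov plus Ces\`aro averaging plus Fatou give $\int \liminf_N \frac{1}{N}\#\{n<N: |S_n\psi(x)|>\varepsilon\}\,d\nu \le 2c_k/\varepsilon$ for every $k$, hence the integrand vanishes $\nu$-a.e. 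This correctly yields, for a single $\psi$ and every $\varepsilon$, that the bad times have lower density zero, and for a single $\psi$ the passage over $\varepsilon=1/m$ is fine because the bad sets are monotone in $\varepsilon$. The payoff of your route is that the maximal inequality is avoided entirely; the price is that you only control densities of bad times rather than suprema over window lengths, which is exactly enough here.

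The gap is in the final aggregation over the countable family $\{\psi_i\}$. Your diagonal step requires, for each $j$, arbitrarily large $N$ at which the bad fraction in $[1,N]$ is \emph{simultaneously} below $1/j^2$ for all pairs $(\psi_i,1/m)$ with $i,m\le j$. What you have proved is only that for each pair separately the $\liminf$ of its bad-density is zero, and these liminfs may be attained along disjoint subsequences of $N$'s: two sets of upper density one can intersect in a set of upper density zero, so the required common $N_j$ need not exist from the individual statements. This is precisely why the paper first proves Lemma~\ref{Birkhoff for finitely many functions} for finitely many functions at once, intersecting the good sets $G_{N,k}$ at the \emph{same} scale $N$ before taking $\limsup_N$ and Borel--Cantelli, and only then diagonalizes over the finite families. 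The repair stays inside your framework: your Markov/smoothing bound in fact shows $\nu\{x: |S_n\psi_i(x)-\mu(\psi_i)|>\varepsilon\}\to 0$ as $n\to\infty$ for each fixed pair (its $\limsup$ is at most $2c_k^{(i)}/\varepsilon$ for every $k$), so for a finite family the Ces\`aro averages of the $\nu$-measures of the \emph{union} of the bad sets tend to $0$; applying Fatou to that union gives that the $\liminf$ of the union's density vanishes $\nu$-a.e., and with this simultaneous statement your windowed construction of $A(x)$ goes through.
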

      
      The proof of Theorem~\ref{Intro-Birkhoff along a subsequence} relies on an adaptation of the weak-type maximal inequality and follows similar lines to the proof of the classical Birkhoff ergodic theorem.
      
      Our next result concerns the more general situation of translating by sequences of transformations.
      In this generality, we assume more structure on the possible limit points of the empirical measures.
      It would be interesting to know if a similar statement is still valid under weaker, more general hypotheses.
      
      \begin{theorem} \label{Intro-Birkhoff for sequences of transformations}
          Let $(X, \mc{B}, \mu)$ be a standard Borel probability space. Let $(T_n)_n$ be a sequence of continuous transformations of $X$.
          Let $S:X\r X$ be a continuous ergodic $\mu$ preserving transformation.
          Let $\nu$ be a probability measure on $X$.
          Assume the following holds:
          \begin{enumerate}
            \item $\frac{1}{N} \sum_{n=1}^{N} (T_n)_\ast \nu \xrightarrow{N\r\infty}  \mu$ in the weak-$\ast$ topology.
            \item For $\nu$-almost every $x\in X$, any limit point of the sequence of measures 
            $\frac{1}{N} \sum_{n=1}^{N} \d_{T_nx} $ is $S$-invariant.
            \item There exists a Borel measurable, $\s$-compact set $Z\in \mc{B}$ such that
            $\mu(Z) = 0$ and for all ergodic $S$ invariant measures $\l \neq \mu$, $\l(Z) = 1$.

          \end{enumerate}

          Then, for $\nu$-almost every $x\in X$, there exists a sequence $A(x) \subseteq \N$, of upper density equal to $1$, such that for all $\psi \in C_c(X)$,
          \[  
            \lim_{\substack{N\r\infty\\ N\in A(x)}} \frac{1}{N} \sum_{n=1}^{N} \psi(T_nx) = \int \psi \; d\mu \]
      \end{theorem}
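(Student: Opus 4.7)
The plan is to combine hypothesis $(1)$ with a Markov--Fatou argument to obtain, in a density-$1$ pointwise sense, control on the mass that $\mu_N^x := \tfrac{1}{N}\sum_{n=1}^N \delta_{T_n x}$ deposits on neighborhoods of compact subsets of $Z$ (as well as on complements of large compacts, to rule out escape of mass), and then to use $(2)$ and $(3)$ to identify the resulting density-$1$ weak-$\ast$ limit as $\mu$.

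First, by $(2)$, for $\nu$-a.e.\ $x$ every weak-$\ast$ subsequential limit $\eta$ of $(\mu_N^x)$ is $S$-invariant. Ergodic decomposition combined with $(3)$ gives, for each such probability limit, the canonical decomposition $\eta = c_\eta\mu + (1-c_\eta)\tilde\eta$ with $\tilde\eta(Z) = 1$, so $\eta = \mu$ if and only if $\eta(K) = 0$ for every compact $K \subseteq Z$.

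Next, fix a compact $K \subseteq Z$ and a relatively compact open neighborhood $U \supseteq K$; by outer regularity of $\mu$ and $\mu(K) = 0$, we may choose $f \in C_c(X)$ with $\mathbb{1}_U \le f \le 1$ and $\int f\, d\mu$ arbitrarily small. Hypothesis $(1)$ gives $\int \mu_N^x(f)\, d\nu \to \int f\, d\mu$. Since $\mathbb{1}_{\{\mu_N^x(U) > \delta\}} \le \mu_N^x(f)/\delta$, Markov and Fatou combine to give
\[
    \int \liminf_{M\to\infty} \tfrac{1}{M}\sum_{N=1}^M \mathbb{1}_{\{\mu_N^x(U) > \delta\}}\, d\nu
    \;\le\; \tfrac{1}{\delta}\int f\, d\mu.
\]
Choosing $U = U_\ell$ in a basis of neighborhoods of $K$ with $\bar U_\ell \downarrow K$ (so $\mu(U_\ell) \to 0$) and applying monotone convergence yields, for $\nu$-a.e.\ $x$,
\[
    \underline{d}\bigl\{N : \mu_N^x(U_\ell) > \delta\bigr\} \xrightarrow{\ell \to \infty} 0.
\]

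Finally, a diagonal construction over a countable exhaustion $K_m \uparrow Z$ by compacts, shrinking neighborhoods $U_{m,\ell}\downarrow K_m$, $\delta = 1/j$, and a parallel escape-of-mass argument (applying $(1)$ to continuous approximants of $\mathbb{1}_{X\setminus L}$ for compacts $L \uparrow X$, using that the Cesaro averages $\tfrac{1}{N}\sum (T_n)_\ast\nu$ are tight by Prokhorov) produces, for $\nu$-a.e.\ $x$, a single $A(x) \subseteq \N$ of upper density $1$ along which $(\mu_N^x)$ is tight and $\mu_N^x(U_{m,\ell}) \to 0$ for every $(m,\ell)$. Portmanteau for open sets then gives $\eta(U_{m,\ell}) \le \liminf \mu_N^x(U_{m,\ell}) = 0$ for every weak-$\ast$ subsequential limit $\eta$ of $(\mu_N^x)_{N \in A(x)}$, so $\eta(K_m) = 0$ for every $m$ and hence $\eta(Z) = 0$. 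Tightness makes $\eta$ a probability, and the first paragraph then forces $\eta = \mu$, yielding $\mu_N^x \to \mu$ weak-$\ast$ along $A(x)$, which is the desired conclusion.

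\textbf{Main obstacle.} The crux is the Markov--Fatou step: it provides only a bound proportional to $\int f\, d\mu$, not zero, so the density-$1$ conclusion emerges only in the monotone/diagonal limit as $f$ approximates $\mathbb{1}_K$ from above. Some care is needed to gather the countably many resulting $\nu$-null exceptional sets---arising from the family $(K_m, U_{m,\ell}, \delta = 1/j)$ and the tightness control---into a single null set outside of which the diagonal set $A(x)$ can be built. Hypotheses $(2)$ and $(3)$ enter only at the very end to identify the surviving density-$1$ weak-$\ast$ limit as $\mu$.
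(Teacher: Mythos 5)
Your Markov--Fatou step is correct as far as it goes: from hypothesis $(1)$ you obtain
\[
\int \underline{d}\bigl\{N : \mu_N^x(U_{m,\ell}) > \delta\bigr\}\,d\nu(x)\;\le\;\frac{1}{\delta}\int f_{m,\ell}\,d\mu,
\]
and hence, for each fixed $(m,\delta)$ and $\nu$-a.e.\ $x$, the \emph{lower} density of the bad set of times tends to $0$ as $\ell\to\infty$, i.e.\ the good set $C_{m,\ell,\delta}(x)=\{N:\mu_N^x(U_{m,\ell})\le\delta\}$ has \emph{upper} density tending to $1$. The genuine gap is in the final ``diagonal construction.'' To conclude that every subsequential limit of $(\mu_N^x)_{N\in A(x)}$ annihilates $Z$, you need $A(x)$ to be eventually contained in $C_{m,\ell(m,j),1/j}(x)$ for every $(m,j)$, while having upper density $1$. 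But upper density does not behave subadditively under intersection: there exist sets $C_1,C_2\subseteq\N$ with $\overline d(C_1)=\overline d(C_2)=1$ and $\overline d(C_1\cap C_2)=0$ (take rapidly growing $N_k$ and let $C_1,C_2$ alternately cover the intervals $[N_k,N_{k+1}]$). So knowing only that each $C_{m,\ell,\delta}(x)$ has upper density near $1$ does not let you pick a single density-$1$ set eventually inside all of them, and the Fatou direction cannot be upgraded to control $\overline d$ of the bad sets (the reverse inequality points the wrong way). The ``care needed to gather the exceptional null sets'' you flag at the end is not the real obstruction; gathering the null sets in $X$ is trivial, whereas gathering the good sets in $\N$ is exactly what fails.

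The paper's proof sidesteps this by using a weak-type maximal inequality (Proposition~\ref{Weak Maximal Ergodic Theorem}) applied to the Urysohn approximants $f_k$ of the compacts $K_k\subseteq Z$. Because it bounds $\nu$ of the set where $\sup_{M\le N}\bigl|\tfrac1M\sum_{l=j}^{j+M-1}f_k(T_l\cdot)\bigr|>\alpha$, one gets, for $y$ in the good set $G_{N,k}$, control of the averages over an entire interval of window sizes $Q\in[\epsilon_k^{1/8}N,N]$ at once; and crucially the sets $G_{N,k}$ live in $X$, so the intersection $V_{N,n}=\bigcap_{k\le n}G_{N,k}$ still has $\nu$-measure $\ge 1-\epsilon$ because $\nu$-measures of the complements add. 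Thus the same intervals of $Q$'s are good \emph{simultaneously} for all $f_1,\dots,f_n$, which is what makes the block diagonal over $n$ produce a single density-$1$ set $A(y)$. Your argument, by moving directly to time averages of indicator events, loses this simultaneity. To repair it you would essentially have to reintroduce a maximal function over partial averages --- which is the content of the paper's Proposition~\ref{Weak Maximal Ergodic Theorem} --- rather than the Ces\`aro average of indicators you control.

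(Two smaller points: the claim that $\mu_N^x(U_{m,\ell})\to 0$ for \emph{every} $(m,\ell)$ along $A(x)$ is not what you want and cannot be true in general, since $\mu(U_{m,\ell})$ need not be $0$ for fixed $\ell$; you only need, for each $m$ and $\epsilon$, \emph{some} $\ell$ with $\limsup_{N\in A(x)}\mu_N^x(U_{m,\ell})<\epsilon$. Also, the paper handles escape of mass more cheaply by passing to the one-point compactification and adding the point at infinity to $Z$, rather than a separate Prokhorov argument; your tightness route is fine but adds bookkeeping.)
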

      We remark that the maximal inequality used in the proof of Theorem~\ref{Intro-Birkhoff for sequences of transformations} does not require the hypotheses of the Theorem.


	We now discuss some applications of these results.
    Theorem~\ref{Intro-Birkhoff along a subsequence} is broadly applicable to general dynamical systems and thus
    we present some applications of Theorem~\ref{Intro-Birkhoff for sequences of transformations}
    within homogeneous dynamics where we demonstrate that its hypotheses are verified.

\subsection{Sparse Equidistribution and Translates of Orbits of Lie Groups}
      
      Our motivation for studying this question comes from the problem of sparse equidistribution of unipotent flows on homogeneous spaces.
      This was conjectured by Shah in~\cite{ShahPolynomial}.
      Recent progress was achieved in~\cite{Venkatesh:Equi} for the horocycle flow on compact quotients of $SL_2(\R)$ along sequences of the form $n^{1+\g}$ for small values of $\g$.
      See also the work of Sarnak and Ubis on the equidistribution along the primes~\cite{Sarnak2015} on $SL_2(\R)/SL_2(\Z)$.
      However, the question in full generality remains open.
      
      With the help of Theorem~\ref{Intro-Birkhoff for sequences of transformations}, we obtain an equidistribution result for exponentially sparse orbits of unipotent flows, of which the horocycle flow is an example.
      We will need some notation to state our results precisely.
      
      Let $G$ be a connected semisimple Lie group and let $\G$ be a lattice in $G$.
      Let $\mu_{G/\G}$ denote the unique $G$ invariant Haar probability measure on $G/\G$.
      Let $H$ be a closed connected subgroup such that the orbit $H \G$ is
      closed in $G/\G$ and supports a probability $H$-invariant measure, denoted by $\mu_H$.
      
      In order to apply Theorem~\ref{Intro-Birkhoff for sequences of transformations} in this setup, we introduce the notion of Ratner Sequences.      
      We say a sequence $g_n$ of elements of $G$ is a \textbf{Ratner Sequence} with respect to $H$ if there exists a non-trivial $Ad$-unipotent element
      $u \in G$ such that for $\mu_H$ almost every $x\in G/\G$,
      any limit point of the sequence of empirical measures $\frac{1}{N} \sum_{n=1}^N \d_{g_n x}$ is invariant by $u$.
      
	  Our main theorem in this set up is the following.
      \begin{theorem} \label{Intro-Pointwise equidistribution for general groups}
        Let $g_n$ be a Ratner Sequence with respect to $H$ satisfying
        \begin{equation} \label{eqn: equidist assumption}
         \frac{1}{N} \sum_{n=1}^N (g_n)_\ast \mu_H \xrightarrow{N\r\infty}
        	\mu_{G/\G}
        \end{equation}        
        Then, for $\mu_{H}$ almost every $x\in G/\G$, there exists a sequence $A(x) \subseteq \N$ of upper density $1$ such that
        \[ \lim_{\substack{N\r\infty\\N\in A(x)}} \frac{1}{N} \sum_{n=1}^N \d_{g_n x} = \mu_{G/\G} \]        
      \end{theorem}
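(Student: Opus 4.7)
The plan is to reduce the statement directly to Theorem~\ref{Intro-Birkhoff for sequences of transformations} by choosing $X = G/\G$, $\mu = \mu_{G/\G}$, $\nu = \mu_H$, the transformations $T_n(x) = g_n x$, and $S(x) = ux$, where $u$ is the non-trivial Ad-unipotent element supplied by the Ratner Sequence hypothesis. Hypothesis~(1) of Theorem~\ref{Intro-Birkhoff for sequences of transformations} is just the standing assumption~\eqref{eqn: equidist assumption}, and hypothesis~(2) is immediate from the definition of a Ratner Sequence, since every weak-$\ast$ limit of $\frac{1}{N}\sum_{n=1}^N \d_{g_n x}$ is $u$-invariant for $\mu_H$-a.e.\ $x$. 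That $S$ is $\mu_{G/\G}$-preserving is automatic from the $G$-invariance of Haar measure, and ergodicity of $S$ with respect to $\mu_{G/\G}$ follows from Moore's ergodicity theorem applied to the one-parameter unipotent subgroup generated by $u$.

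The substantive step is the construction of the set $Z$ required by hypothesis~(3). Here I would invoke Ratner's measure classification theorem: every ergodic $u$-invariant probability measure $\l$ on $G/\G$ is homogeneous, i.e.\ there is a closed connected subgroup $F \subseteq G$ containing $u$ and a point $y \in G/\G$ such that $Fy$ is closed, carries a finite $F$-invariant measure, and $\l$ is that measure. Combining this with the Dani--Margulis--Mozes--Shah linearization / countability results, the collection of \emph{proper} (i.e.\ $F \neq G$) closed orbits with a finite invariant measure is contained in a countable union
\[ Z = \bigcup_{i \in \N} F_i y_i \G / \G \]
of closed proper homogeneous subsets of $G/\G$. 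By construction $Z$ is $\s$-compact (each $F_i y_i \G/\G$ is closed in a $\s$-compact space). Since each proper closed orbit is contained in a submanifold of strictly smaller dimension, $\mu_{G/\G}(F_i y_i \G/\G) = 0$ for every $i$, whence $\mu_{G/\G}(Z) = 0$. On the other hand, by the Ratner classification just recalled, any ergodic $u$-invariant $\l \neq \mu_{G/\G}$ is supported on one of the orbits $F_i y_i \G/\G$, giving $\l(Z) = 1$.

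The main obstacle is the verification that the singular set $Z$ above is genuinely $\s$-compact and Borel while still having $\l(Z) = 1$ for every non-Haar ergodic $u$-invariant $\l$; this is where the countability theorem of Mozes--Shah is essential, as Ratner's classification alone produces uncountably many candidate subgroups $F$ (parameterized by the base points $y$) that must be organized into a countable family of algebraically-defined orbits. Once hypothesis~(3) is in place, Theorem~\ref{Intro-Birkhoff for sequences of transformations} applies verbatim and yields, for $\mu_H$-a.e.\ $x \in G/\G$, a set $A(x) \subseteq \N$ of upper density one along which $\frac{1}{N}\sum_{n=1}^N \psi(g_n x) \to \int \psi \, d\mu_{G/\G}$ for every $\psi \in C_c(G/\G)$. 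This is the weak-$\ast$ convergence of empirical measures asserted in the theorem.
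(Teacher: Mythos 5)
Your overall strategy — reduce to Theorem~\ref{Intro-Birkhoff for sequences of transformations} with $X=G/\G$, $\mu = \mu_{G/\G}$, $\nu = \mu_H$, $T_n = g_n$, $S = u$, and invoke Ratner's measure classification to build $Z$ — is exactly the paper's, and hypotheses (1) and (2) are verified correctly (your appeal to Moore's theorem for ergodicity of $S$ is a reasonable way to supply a detail the paper leaves implicit). But your construction of $Z$ as a countable union of individual closed orbits $\bigcup_{i} F_i y_i \G/\G$ is wrong, and you have half-noticed this yourself: the collection of proper closed homogeneous orbits carrying finite invariant measure is in general \emph{uncountable}. In $SL_2(\R)/SL_2(\Z)$, for instance, the periodic horocycle orbits form a one-parameter family. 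No amount of ``organizing into a countable family of algebraically-defined orbits'' resolves this, because the obstruction is not redundancy in the parameterization by base points; the orbits really are uncountably many, so a countable union of them cannot carry full mass for every non-Haar ergodic $W$-invariant measure.

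The correct device, which the paper uses, is to take the countable union not over orbits but over the countable collection $\mc{L}$ of proper analytic subgroups $L\leq G$ with $L\cap\G$ a lattice; this countability is Ratner's (one finds the same statement in Dani--Margulis). For each $L\in\mc{L}$ one forms the singular set $N(L,W)=\set{g\in G : g^{-1}Wg\subseteq L}$, which is a single analytic subvariety of $G$, and hence $\pi(N(L,W))$ admits a filtration by compact sets, while simultaneously containing \emph{all} of the (possibly uncountably many) closed orbits of $L$-conjugates on which non-Haar ergodic $W$-invariant measures can be supported. Then $Z=\bigcup_{L\in\mc{L}}\pi(N(L,W))$ is $\s$-compact, has $\mu_{G/\G}$-measure zero, and by Ratner's classification every ergodic $W$-invariant $\l\neq\mu_{G/\G}$ satisfies $\l(Z)=1$. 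The crucial point your argument misses is that the countability lives at the level of algebraic types of subgroups, not at the level of orbits or base points.
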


      Apart from Theorem~\ref{Intro-Birkhoff for sequences of transformations}, a key ingredient in the proof of Theorem~\ref{Intro-Pointwise equidistribution for general groups} is Ratner's measure classification theorem~\cite{RatnerMeasure}.
      
      The assumption on the equidistribution of the translates of $\mu_H$ by a sequence of elements $g_n$ is satisfied in numerous examples in homogeneous dynamics.
      For example, it was shown in~\cite{eskin1993} that~\eqref{eqn: equidist assumption} is satisfied as soon as $H$ is a symmetric subgroup of $G$ and $g_n$ tends to infinity in $G/H$.
      Recall that $H$ is said to be a symmetric subgroup if it is the fixed point set of an involution of $G$.
      
      The result of Eskin and McMullen was significantly strengthened in~\cite{EskinMozesShah} to include translates of reductive subgroups that is, subgroups which are only invariant by an involution of $G$.
      Note that we only require $H$ to be connected in Theorem~\ref{Intro-Pointwise equidistribution for general groups}.
      Thus, the applicability of Theorem~\ref{Intro-Pointwise equidistribution for general groups} boils down to the existence of Ratner sequences which is the subject of the next section.

      \subsubsection{\textbf{Existence of Ratner Sequences}}
      We prove a general criterion on the existence of sparse Ratner Sequences inside $1$-parameter unipotent subgroups.
      The proof relies on the representation theory of embedded copies of $SL_2(\R)$ inside $G$ in addition to an adaptation of a technique developed by Chaika and Eskin~\cite{EskinChaika} in the setting of strata of Abelian differentials.

      Let the notation be the same as above and let $Z_G(H)$ denote the centralizer of $H$ inside $G$.
      The following is the main statement.

      \begin{theorem} \label{thrm: existence of Ratner sequences in 1-param unipotent subgroups}
    Let $U = \set{u_t: t\in \R}$ be a $1$-parameter $Ad$-unipotent subgroup of $G$ such that $U\not\subset Z_G(H)$. 
	Suppose $t_n >0$ is a sequence satisfying
        	\[  t_n = O\left(  e^{\l n } \right)  \]
    for some constant $\l > 0$ and all $ n \geq 1$,
    Then, $u_{t_n}$ is a Ratner sequence for $H$.
    \end{theorem}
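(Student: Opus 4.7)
The plan is to identify, first, an explicit Ad-unipotent element witnessing the Ratner sequence property, and then to transfer $\mu_H$-invariance into $\exp(v)$-invariance of limits via a quantitative $L^2$ bound in the spirit of Chaika--Eskin. Writing $u_t = \exp(tX)$, the Ad-nilpotent generator $X \in \mf{g}$ extends by Jacobson--Morozov to an $\mathfrak{sl}_2$-triple $(X, Y_0, H_0) \subset \mf{g}$, making $\mf{g}$ a finite-dimensional $\mathfrak{sl}_2$-module under $\mathrm{ad}$. Since $U \not\subset Z_G(H)$, there exists $Y \in \mf{h}$ with $[X, Y] \neq 0$; let $d \geq 1$ be maximal with $v := \frac{1}{d!}\,\mathrm{ad}(X)^d Y \neq 0$. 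Then $[X, v] = 0$, and decomposing $Y$ into $\mathfrak{sl}_2$-irreducible components shows $v$ lies in the sum of strictly positive $\mathrm{ad}(H_0)$-weight spaces (trivial irreducibles contribute nothing for $d \geq 1$). Hence $\mathrm{ad}(v)$ is nilpotent and $\exp(v) \in G$ is a nontrivial Ad-unipotent element, the intended witness.

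Next I would establish the key conjugation identity. Setting $h_s := \exp(sY) \in H$ and $s_n := 1/t_n^d$, the polynomial expansion $\mathrm{Ad}(u_{t_n})Y = \sum_{k=0}^{d} \frac{t_n^k}{k!}\,\mathrm{ad}(X)^k Y$ yields
\[
u_{t_n} h_{s_n} u_{-t_n} \;=\; \exp\!\bigl(s_n\,\mathrm{Ad}(u_{t_n})Y\bigr) \;=\; \exp\!\bigl(v + O(1/t_n)\bigr) \;\xrightarrow[n \to \infty]{}\; \exp(v).
\]
Consequently, for Lipschitz $\phi \in C_c(G/\G)$ and $x$ in a fixed compact set, $\phi(\exp(v)\,u_{t_n} x) = \phi(u_{t_n} h_{s_n} x) + O_\phi(1/t_n)$. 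Verifying that every weak-$*$ limit of $\frac{1}{N}\sum_n \d_{u_{t_n} x}$ is $\exp(v)$-invariant thus reduces, for $\mu_H$-a.e.\ $x$, to showing $\frac{1}{N}\sum_{n=1}^{N} f_n(x) \to 0$, where $f_n(x) := \phi(u_{t_n} h_{s_n} x) - \phi(u_{t_n} x)$. Each $f_n$ is mean-zero against $\mu_H$ by $H$-invariance under $h_{s_n}$.

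The main obstacle is the cross-correlation estimate on $\int f_n f_m\,d\mu_H$ for $n \neq m$. With $\tilde\phi_k(x) := \phi(u_{t_k} x)$ and $T_s f(x) := f(h_s x)$, the unitarity of $T_s$ on $L^2(\mu_H)$ rewrites the correlation as $\int \bigl[(I - T_{-s_m})(I - T_{s_n})\tilde\phi_n\bigr]\,\tilde\phi_m\,d\mu_H$. For $n < m$, the factor $I - T_{-s_m}$ displaces its argument by $\exp(-s_m\,\mathrm{Ad}(u_{t_n})Y)$, whose distance to the identity is $O((t_n/t_m)^d)$ by the polynomial expansion above; combined with the Lipschitz control on $\phi$ (and the $\|\mathrm{Ad}(u_{t_n})\| = O(t_n^d)$ stretching of $\tilde\phi_n$), this yields $|\int f_n f_m\,d\mu_H| \leq C\,\mathrm{Lip}(\phi)\,(t_n/t_m)^d$. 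Under genuine exponential growth of $t_n$ (implicit in the exponentially sparse intent of the hypothesis), the off-diagonal double sum is $O(N)$, so $\int \bigl|\tfrac{1}{N}\sum_{n \leq N} f_n\bigr|^2\,d\mu_H = O(1/N)$, and a Borel--Cantelli argument along $N_k = 2^k$ together with interpolation between consecutive $N_k$'s produces $\mu_H$-a.s.\ convergence. Density of Lipschitz $\phi$ in $C_c(G/\G)$ extends the conclusion to all test functions, completing the verification that $u_{t_n}$ is a Ratner sequence with witness $\exp(v)$. The hardest aspect is pinning down this decorrelation using only the $\mathfrak{sl}_2$-representation theoretic growth of $\mathrm{Ad}(u_{t_n})$ on $\mf{h}$---translating the Chaika--Eskin surface argument into a purely Lie-algebraic statement on $\mf{g}$.
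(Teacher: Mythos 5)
Your proof follows the same overall blueprint as the paper: Jacobson--Morozov and $\mathfrak{sl}_2$-representation theory to pin down the unipotent witness $\exp(v)$ and the conjugation identity $u_{t_n}\exp(s_nY)u_{-t_n}\to\exp(v)$ (this is Proposition~\ref{propn: Jacobson-Morozov}); reduction to showing $\frac{1}{N}\sum_{n\le N}f_n\to0$ $\mu_H$-a.e.\ (Proposition~\ref{f_n go to zero implies unipotent invariance}); a correlation estimate; then Borel--Cantelli plus interpolation. The real departure is the correlation estimate, where your route is genuinely simpler and sharper than the paper's. The paper (Lemma~\ref{decay of correlations} together with Proposition~\ref{measure of symmetric difference}) averages $f_nf_m$ over balls of radius $r=(d_nd_m)^{-1/2}$ in $H\G/\G$, splits into thick and thin parts via injectivity radii and Siegel sets, treats the less-stretched factor as roughly constant on each ball and bounds the ball-average of the more-stretched one by a symmetric-difference estimate; balancing the competing errors forces the square root and yields a bound of order $(d_m/d_n)^{1/2}$. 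You instead observe that $T_s:f\mapsto f(h_s\cdot)$ is unitary on $L^2(\mu_H)$ because $h_s\in H$ and $\mu_H$ is $H$-invariant, so $\int f_nf_m\,d\mu_H=\int\bigl[(I-T_{-s_m})(I-T_{s_n})\tilde\phi_n\bigr]\tilde\phi_m\,d\mu_H$, and the bracket is controlled pointwise by $\mathrm{Lip}(\phi)$ times the distance from $\exp(-s_m\,\mathrm{Ad}(u_{t_n})Y)$ to the identity. This exploits the exact $H$-invariance of $\mu_H$ rather than approximating it geometrically, eliminates the thick/thin and injectivity-radius machinery entirely, and gives the sharper bound $O\bigl((t_n/t_m)^d\bigr)$, with exponent $1$ rather than $1/2$.

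There is one concrete gap at the very end: the Borel--Cantelli step along $N_k=2^k$ does not close as stated. Your variance bound $\int|S_N/N|^2\,d\mu_H=O(1/N)$ with Chebyshev and Borel--Cantelli does give $S_{N_k}/N_k\to0$ a.e.\ along $N_k=2^k$, but for $N_k\le N<N_{k+1}$ the interpolation
\[
\Bigl|\tfrac{S_N}{N}\Bigr|\;\le\;\Bigl|\tfrac{S_{N_k}}{N_k}\Bigr|\;+\;\frac{N_{k+1}-N_k}{N_k}\cdot 2\norm{\phi}_\infty
\]
needs $N_{k+1}/N_k\to1$; with dyadic gaps the second term is $\Theta(\norm{\phi}_\infty)$ and never decays. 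Absent a maximal inequality for the weakly dependent $f_n$ (which you do not invoke, and which would be nontrivial here), the fix is to take a polynomially growing subsequence such as $N_k=k^2$: then $\sum_k N_k^{-1}<\infty$ so Borel--Cantelli still applies with your sharper variance, and $(N_{k+1}-N_k)/N_k=O(1/k)\to0$ makes the interpolation converge. (The paper, with its weaker $O(N^{-1/2})$ variance, uses $N_k=k^4$ for exactly this reason.) Finally, your remark that the stated hypothesis $t_n=O(e^{\l n})$ must be read as genuine exponential growth is well taken: the off-diagonal summability in both your argument and the paper's requires a lower bound on $t_m/t_n$ for $m>n$, not just the upper bound.
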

    The exponential growth condition is used in the adaptation of Chaika-Eskin's technique which is similar in spirit to Breiman's law of large numbers.
    It uses the law of large numbers for weakly dependent random variables.
    Exponential growth guarantees a sufficiently fast rate of decay of correlations.
      
      In the appendix, we prove a more crude criterion for the existence of Ratner sequences in the setting where $H$ is a symmetric subgroup of $G$, but we drop the restriction on the unipotence of the elements $g_n$.
           
      We show that any sequence $g_n$ satisfying an exponential growth condition similar to the one in Theorem~\ref{thrm: existence of Ratner sequences in 1-param unipotent subgroups} contains a Ratner sequence as a subsequence.
      See Theorem~\ref{unipotent invariance} for the precise statement.

      In the particular instance when $G=SL_2(\R)$ and the sequence $g_n$ comes from the action of the horocycle flow,      
      Theorem~\ref{Intro-Pointwise equidistribution for general groups} takes the following form:
      
      \begin{corollary} \label{sparse equidistribution corollary}
      Let $G = SL(2,\R)$, $\G\subset G$ a lattice and let $K = SO(2)$.
      Let $\l>0$ and for $n\in\N$, let $t_n = e^{\l n}$.
      Let
      \[ g_n = \begin{pmatrix}
      1 & t_n \\ 0 & 1
      \end{pmatrix},
      k_\th = \begin{pmatrix}
      	\cos(\th) & \sin(\th) \\
        -\sin(\th) & \cos(\th)
      \end{pmatrix} 
      \]

      Then, for \textit{every} $x\in G/\G$ and for almost every $\theta\in [0,2\pi]$, there exists a sequence $A(\theta)\subseteq \N$ of upper density $1$ such that
      \[
      \lim_{\substack{N\r\infty\\N\in A(\theta)}} \frac{1}{N} \sum_{n=1}^N \d_{g_n k_\th x} = \mu_{G/\G}
      \]
      Moreover, if $G/\G$ is compact then $A(\theta) = \N$.
      \end{corollary}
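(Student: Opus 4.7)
The plan is to derive the corollary as a direct application of Theorem~\ref{Intro-Pointwise equidistribution for general groups} with $H=K$, after first reducing to the case where $x$ is the identity coset. Given any $x=h\Gamma$, the map $g\Gamma\mapsto gh^{-1}\cdot(h\Gamma h^{-1})$ is a $G$-equivariant homeomorphism from $G/\Gamma$ onto $G/(h\Gamma h^{-1})$ sending $x$ to the identity coset, pushing Haar measure to Haar measure, and intertwining the left action of each $g_n$. Since both the hypotheses on $g_n$ and the conclusion depend only on the conjugacy class of $\Gamma$, it suffices to prove the result when $x=e\Gamma$; in that case the measure induced on the circle $Kx$ is precisely the $K$-invariant measure $\mu_K$ of Theorem~\ref{Intro-Pointwise equidistribution for general groups}.

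To apply Theorem~\ref{Intro-Pointwise equidistribution for general groups} I must verify (i) that $(g_n)$ is a Ratner sequence for $K$ and (ii) that $\tfrac{1}{N}\sum_{n=1}^N(g_n)_\ast\mu_K\r\mu_{G/\Gamma}$ in the weak-$\ast$ topology. For (i) I appeal to Theorem~\ref{thrm: existence of Ratner sequences in 1-param unipotent subgroups}: a direct matrix computation shows $Z_G(K)=K$, which contains no nontrivial unipotent, so the $1$-parameter unipotent subgroup $U=\{g_t:t\in\R\}$ is not contained in $Z_G(K)$; the growth condition $t_n=e^{\lambda n}=O(e^{\lambda n})$ is trivial. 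For (ii) I invoke the classical equidistribution of expanding circle translates: since $K=SO(2)$ is a symmetric subgroup of $G$ and $u_{t_n}$ leaves every compact subset of $G/K$ (projecting to $\mathbb{H}^2$, the point $u_{t_n}\cdot i = i+t_n$ escapes to the cusp), Eskin--McMullen~\cite{eskin1993} yields $(g_n)_\ast\mu_K\r\mu_{G/\Gamma}$, whence the Cesàro average converges as well. Theorem~\ref{Intro-Pointwise equidistribution for general groups} then produces, for almost every $\theta\in[0,2\pi]$, a set $A(\theta)\subseteq\N$ of upper density $1$ along which the empirical measures equidistribute to $\mu_{G/\Gamma}$.

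For the moreover claim, when $G/\Gamma$ is compact I upgrade $A(\theta)$ to $\N$ by a subsequential compactness argument. The space of Borel probability measures on the compact space $G/\Gamma$ is weak-$\ast$ compact, so every subsequence of $\tfrac{1}{N}\sum_{n=1}^N \d_{g_n k_\theta x}$ has a further subsequence weak-$\ast$ converging to a probability measure $\lambda$; by the Ratner sequence property, $\lambda$ is invariant under a nontrivial unipotent element, and Ratner's measure classification theorem (or, equivalently in this compact $SL(2,\R)$ setting, Furstenberg's unique ergodicity of the horocycle flow) forces $\lambda=\mu_{G/\Gamma}$. Since every subsequence has the same limit, the full sequence converges and $A(\theta)$ may be taken equal to $\N$ for almost every $\theta$. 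The only substantive input in this plan is Theorem~\ref{thrm: existence of Ratner sequences in 1-param unipotent subgroups}, whose proof supplies the sparse Ratner sequence property; the remainder is bookkeeping around a classical translate-equidistribution result and, in the cocompact case, standard unipotent measure rigidity.
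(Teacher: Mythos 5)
Your proof is correct and follows essentially the same route as the paper: verify that $g_n$ is a Ratner sequence for $K=SO(2)$ via Theorem~\ref{thrm: existence of Ratner sequences in 1-param unipotent subgroups} (using $Z_G(K)=K$), invoke Eskin--McMullen for the equidistribution of the translated circles, apply Theorem~\ref{Intro-Pointwise equidistribution for general groups}, and in the cocompact case upgrade to $A(\theta)=\N$ via weak-$\ast$ compactness and unique ergodicity of the horocycle flow. The one added value of your write-up is making explicit the conjugation reduction from an arbitrary basepoint $x=h\Gamma$ to the identity coset, which the paper leaves implicit when passing from the fixed closed orbit $H\Gamma$ in Theorem~\ref{Intro-Pointwise equidistribution for general groups} to the assertion ``for every $x$.''
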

      
      \begin{proof}
      Let $H = SO(2)$.
      Since $H$ is its own centralizer in $G$, by Theorem~\ref{thrm: existence of Ratner sequences in 1-param unipotent subgroups}, $g_n$ is a Ratner sequence for $H$.
      By the work of Eskin and McMullen~\cite{eskin1993}, one has that $g_n \mu_H \r \mu_{G/\G}$.
      Thus, the corollary follows from Theorem~\ref{Intro-Pointwise equidistribution for general groups}.
      When $G/\G$ is compact, the action by unipotent elements on $G/\G$ is uniquely ergodic in this case, the Haar measure being the only invariant measure.
    Moreover, the space of probability measures on $G/\G$ is weak-$\ast$ compact and hence all limit points of the sequences of empirical measures are probability measures.
    Since $g_n$ is a Ratner sequence, the claim follows.
      \end{proof}
      
      The main point is that this holds for \text{every} $x$.
      This is not guaranteed by any general theorem on sparse equidistribution almost everywhere.


The paper is organized as follows. In \S~\ref{Maximal Ergodic Theorem Section}, we prove an analogue of the maximal ergodic theorem in our set up.
We use this to prove Theorems~\ref{Intro-Birkhoff along a subsequence} and~\ref{Intro-Birkhoff for sequences of transformations}
in \S~\ref{section Birkhoff for powers of a single transformation} and \S~\ref{section Birkhoff for sequences of transformations}.
In \S~\ref{section unipotent invariance}, we discuss the existence of Ratner sequences and prove Theorem~\ref{thrm: existence of Ratner sequences in 1-param unipotent subgroups}.
In \S~\ref{section-general groups}, we provide a proof of Theorem~\ref{Intro-Pointwise equidistribution for general groups}.

\begin{acknowledgement}
	I would like to thank my advisor, Nimish Shah, for numerous valuable discussions.
\end{acknowledgement}

\section{An Analogue of the Maximal Inequality}
\label{Maximal Ergodic Theorem Section}

	The following proposition is an extension of the classical maximal ergodic theorem to the set up involving sequences of transformations and a non-invariant measure.
    
  \begin{proposition} \label{Weak Maximal Ergodic Theorem}
      Let $(X, \mc{B})$ be a standard Borel probability space. Let $T_n$ be a sequence of continuous transformations on $X$. Let $\nu,\mu$ be probability measures on $X$ such that
      \[ \frac{1}{N} \sum_{n=0}^{N-1} (T_n)_\ast \nu \xrightarrow[N\r\infty]{weak-\ast}\mu\]
      Let $\psi \in C_c(X)$ and let $\a >0$ and $\b\in (0,1)$. For every $j,N\geq 1$, define the set
      \[ E_{\a,N,j}^\psi = \set{x\in X: \sup_{1\leq M\leq N } \left|\frac{1}{M} \sum_{k=j}^{j+M-1} \psi(T_k x)\right| > \a   } \]

      Then, for every $N\gg 1$, depending on $\psi$, there exists some $0\leq j_N< \b N$, such that

      \[ \a\b \nu( E_{\a,N,j_N}^\psi) \leq 12 ||\psi||_{L^1(\mu)} \]

  \end{proposition}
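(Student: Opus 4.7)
The plan is to adapt the classical weak-type $(1,1)$ maximal inequality, with the crucial new ingredient being a pigeonhole over the starting index $j$ --- this is what lets the weak-$\ast$ hypothesis on the pushforwards be invoked, compensating for the fact that $\nu$ is not itself preserved by the $T_n$. Since $E^\psi_{\a,N,j}$ is defined via an absolute value, the triangle inequality gives $E^\psi_{\a,N,j} \subseteq E^{|\psi|}_{\a,N,j}$, and replacing $\psi$ by $|\psi|$ leaves the $L^1(\mu)$ norm unchanged, so one may assume $\psi \geq 0$ throughout.

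Fix $x \in X$ and set $B(x) := \{\, j \in \{1,\dots,\lfloor \b N \rfloor\} : x \in E^\psi_{\a,N,j}\,\}$. For each $j \in B(x)$ pick the smallest $M_j(x) \in [1,N]$ with $\sum_{k=j}^{j+M_j(x)-1} \psi(T_k x) > \a M_j(x)$, and let $I_j(x) := [j, j+M_j(x)-1]$. A greedy left-to-right scan of these intervals extracts a pairwise disjoint subfamily $I_{j_1}(x), I_{j_2}(x),\dots$: at each stage select the smallest element of $B(x)$ lying to the right of all intervals already chosen. Any element of $B(x)$ is then contained in some $I_{j_i}(x)$, for if $j_i \leq j < j_{i+1}$ but $j \geq j_i + M_{j_i}$, then $j$ would have been chosen in place of $j_{i+1}$. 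Since the selected intervals are disjoint subsets of $[1, N + \lfloor \b N\rfloor]$, combining their defining inequalities yields the pointwise bound
\[ |B(x)| \;\leq\; \sum_i M_{j_i}(x) \;<\; \frac{1}{\a} \sum_i \sum_{k \in I_{j_i}(x)} \psi(T_k x) \;\leq\; \frac{1}{\a} \sum_{k=1}^{N+\lfloor \b N\rfloor} \psi(T_k x). \]

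Integrating this pointwise bound against $\nu$ and using Tonelli to swap the implicit sum over $j$ in $|B(x)|$ with the integral gives
\[ \sum_{j=1}^{\lfloor \b N\rfloor} \nu\bigl(E^\psi_{\a,N,j}\bigr) \;=\; \int |B(x)|\, d\nu(x) \;\leq\; \frac{1}{\a} \sum_{k=1}^{N+\lfloor \b N\rfloor} \int \psi\, d\bigl((T_k)_\ast\nu\bigr). \]
Because $\psi \in C_c(X)$, the weak-$\ast$ hypothesis asserts $\frac{1}{M}\sum_{k=1}^M \int \psi\, d((T_k)_\ast\nu) \to \|\psi\|_{L^1(\mu)}$, so for all $N$ large enough (depending on $\psi$ and $\b$), the right-hand side is at most $3\a^{-1}(N+\lfloor \b N\rfloor)\|\psi\|_{L^1(\mu)}$. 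Choose $j_N \in \{1,\dots,\lfloor \b N\rfloor\}$ minimising $\nu(E^\psi_{\a,N,j})$; combining the pigeonhole $\nu(E^\psi_{\a,N,j_N}) \leq \lfloor \b N\rfloor^{-1}\sum_j \nu(E^\psi_{\a,N,j})$ with the elementary estimates $\lfloor \b N\rfloor \geq \b N/2$ and $N+\lfloor \b N\rfloor \leq 2N$ for large $N$ then gives $\a\b\,\nu(E^\psi_{\a,N,j_N}) \leq 12 \|\psi\|_{L^1(\mu)}$, as required. The only step requiring care is the greedy covering (making the disjoint $I_{j_i}$ fit inside the window of length $\sim(1+\b)N$ while covering all of $B(x)$); once that is in place, the transfer from $\nu$ to $\mu$ via weak-$\ast$ convergence and the final pigeonhole in $j$ are essentially bookkeeping.
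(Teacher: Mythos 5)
Your proof is correct and takes essentially the same approach as the paper: a Vitali-type covering argument applied pointwise in $x$ to the set of bad starting indices $j$, integrated against $\nu$, with the weak-$\ast$ hypothesis used to transfer the resulting Ces\`aro bound to $||\psi||_{L^1(\mu)}$ and a final pigeonhole over $j \in [0,\b N)$. The only cosmetic difference is that you carry out the greedy interval selection directly, whereas the paper packages the same step into a citation of the abstract $\ell^1(\Z)$ maximal inequality (Einsiedler--Ward, Lemma 2.29), which is proved by exactly the covering argument you wrote out.
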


	We will deduce this proposition from the classical maximal inequality for $l^1(\Z)$ which is a consequence of Vitali's covering lemma. The proof follows closely the proof of the usual maximal inequality.
    
    \begin{lemma} [Lemma 2.29,~\cite{EinsiedlerWard}]  \label{max inequality for shift}
    	Let $\phi \in l^1(\Z)$. Define the following maximal function, for $a\in \Z$:
        	\[ \phi^\ast(a) = \sup_{N\geq 1}\left| 
            \frac{1}{N} \sum_{i=0}^{N-1} \phi(i+a) \right|	\]
         Let $\a>0$ and define 
         	\[ E_\a =\set{a\in\Z\mid \phi^\ast(a) > \a} \]
         Then,
         \[ \a \left| E_\a\right| \leq 3 ||\phi||_{l^1(\Z)} \]
    \end{lemma}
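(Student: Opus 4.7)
The plan is to follow the classical Hardy--Littlewood--Vitali template, transported to $\Z$. For each $a \in E_\a$, the definition of $\phi^\ast$ yields some integer $N_a \geq 1$ with
\[
\left|\sum_{i=a}^{a+N_a-1}\phi(i)\right| > \a N_a,
\]
so the discrete interval $I_a := \{a, a+1, \ldots, a+N_a-1\}$ has cardinality $N_a$ and carries total $\phi$-mass (in absolute value) exceeding $\a |I_a|$. In particular, $E_\a \subseteq \bigcup_{a \in E_\a} I_a$, and each covering interval comes equipped with a built-in lower bound on the $l^1$-mass of $\phi$ over it.

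The next step is a Vitali-type selection. Since $\phi$ is only $l^1$, the set $E_\a$ may in principle be infinite, so I would first fix an arbitrary finite $F \subseteq E_\a$ and work with the finite family $\{I_a : a \in F\}$; the final bound will hold for every such $F$. From this finite family, the standard greedy procedure (pick a longest interval, discard any that meet it, recurse on the rest) extracts a pairwise disjoint subcollection $I_{a_1}, \ldots, I_{a_k}$ with the property that every $I_a$ in the original family lies inside $3 I_{a_j}$ for some $j$, where $3 I_{a_j}$ denotes the interval of length $3 N_{a_j}$ concentric with $I_{a_j}$. Hence $F \subseteq \bigcup_j 3 I_{a_j}$.

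Combining the covering bound with the mass-per-interval lower bound gives
\[
|F| \leq \left|\bigcup_{j=1}^k 3 I_{a_j}\right| \leq 3 \sum_{j=1}^k |I_{a_j}| < \frac{3}{\a}\sum_{j=1}^k \left|\sum_{i \in I_{a_j}}\phi(i)\right| \leq \frac{3}{\a}\sum_{j=1}^k \sum_{i \in I_{a_j}} |\phi(i)| \leq \frac{3}{\a}\|\phi\|_{l^1(\Z)},
\]
where the last inequality uses disjointness of the $I_{a_j}$. Letting $F$ exhaust $E_\a$ then yields $\a |E_\a| \leq 3 \|\phi\|_{l^1(\Z)}$, as required.

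The only delicate point is verifying that the greedy selection really produces the constant $3$: when an interval $I_b$ of length $N_b$ is discarded because it meets a selected interval $I_{a_j}$ of length $N_{a_j} \geq N_b$, one has to check that $I_b$ can extend at most $N_b - 1 \leq N_{a_j} - 1$ integers past either endpoint of $I_{a_j}$, hence $I_b \subseteq 3 I_{a_j}$. Beyond this observation everything is straightforward bookkeeping, and no new ingredients beyond the classical real-variable proof of the Hardy--Littlewood maximal inequality are needed.
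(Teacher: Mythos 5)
Your proof is correct and is essentially the argument the paper has in mind: the lemma is stated without proof, cited from Einsiedler--Ward with the remark that it is a consequence of Vitali's covering lemma, which is exactly the greedy three-fold covering argument you carry out (each point of $E_\a$ being the left endpoint of an interval $I_a$ carrying $\phi$-mass exceeding $\a|I_a|$, a disjoint subfamily whose triples cover, and disjointness giving the $l^1$ bound). Your reduction to finite subsets $F$ of $E_\a$ and your verification that a discarded interval lies in the concentric triple of the longer selected interval it meets are both sound, so the constant $3$ comes out as claimed.
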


    \begin{proof}[Proof of Proposition~\ref{Weak Maximal Ergodic Theorem}]
    	Let $\psi \in C_c(X)$ and let $\a>0$, $\b\in (0,1)$.
        Let $N\geq 1$ and let $E_{\a,N,j}$ be as in the statement.
        Let $x\in X$ and let $J > N$ be a parameter to be determined later. Define the following function
        \[
        	\phi(j) = \begin{cases}
        		\psi(T_j x) & 0\leq j \leq J\\
                0 & otherwise
        	\end{cases}
        \]
        
        Then, clearly $\phi \in l^1(\Z)$. For $a\in \Z$, define the following two functions
        \begin{align*}
          \phi^\ast(a) =  \sup_{1\leq M} \left|\frac{1}{M}\sum_{k=0}^{M-1}\phi(k+a)\right|, &&
          \phi^\ast_N (a)= \sup_{1\leq n\leq N} 
                  \left|\frac{1}{n}\sum_{k=0}^{n-1}\phi(k+a)\right|
        \end{align*}
        
        Define the corresponding exceptional sets
        \begin{align*}
        	E_{\a}^\phi = \set{a\in \Z \mid \phi^\ast(a) > \a}, &&
            E_{\a,N}^\phi = \set{a\in [0,J-N-1] \mid \phi_N^\ast(a) > \a}
        \end{align*}
        
        By Lemma~\ref{max inequality for shift} applied to $\phi$, we have
        
        \begin{eqnarray} \label{shift inequality}
        	\a\left| E_{\a,N}^\phi \right| \leq \a \left| E_\a^\phi \right| \leq 3||\phi||_{l^1(\Z)}
        \end{eqnarray}
        
        Note that for $a\in [0,J-N-1]$, we have
        \begin{align} \label{phi = psi}
        \phi^\ast_N(a) = \sup_{1\leq n\leq N} \left|\frac{1}{n}\sum_{k=0}^{n-1}\psi(T_{k+a}x)\right|
        \end{align}
        
        Let $\chi_{{\a,N,j}}$ denote the indicator function of $E_{\a, N,j}^\psi$.
        Thus, for $j \in [0,J-N-1]$,
        \begin{align} \label{relationship between exceptional set for psi and phi}
          \chi_{{\a,N,j}}(x) =1 \text{ if and only if } j \in E_{\a,N}^\phi
        \end{align}

        Thus, combining~\eqref{shift inequality},~\eqref{phi = psi} and~\eqref{relationship between exceptional set for psi and phi} along with the definition of $\phi$, we get
        \begin{align*}
        	\a \sum_{j=0}^{J-N-1} \chi_{{\a,N,j}}(x)
            	&= \a \left| E_{\a,N}^\phi \right|
                \leq 3 \sum_{j=0}^{J} |\phi(j)| = 3 \sum_{j=0}^J |\psi(T_jx)|
        \end{align*}
        
        Integrating both sides of the above with respect to $\nu$ yields
        \begin{eqnarray} \label{equality on average}
        	\a \sum_{j=0}^{J-N-1} \nu(E_{\a,N,j}^\psi) \leq 3\sum_{j=0}^J \int |\psi(T_jx)|\; d\nu(x)
        \end{eqnarray}
        
        Taking $J = (1+\b)N$ in~\eqref{equality on average} and dividing both sides by $J-N$
        \begin{align} \label{J = 1 +beta}
        	\frac{\a}{\b N} \sum_{j=0}^{\b N-1} \nu(E_{\a,N,j}^\psi) &\leq 3\frac{(1+\b)N+1}{\b N}  \frac{1}{(1+\b) N+1}\sum_{j=0}^{(1+\b) N} \int |\psi(T_jx)|\; d\nu(x) \nonumber \\
            &\leq \frac{6}{\b} \frac{1}{(1+\b) N+1}\sum_{j=0}^{(1+\b) N} \int |\psi(T_jx)|\; d\nu(x)
        \end{align}
        
        Now, by assumption,
        \begin{eqnarray} \label{convergence weak star}
        	\frac{1}{(1+\b) N+1}\sum_{j=0}^{(1+\b) N} \int |\psi(T_jx)|\; d\nu(x) 
            \r \int |\psi|\;d\mu  = ||\psi||_{L^1(\mu)} \nonumber
        \end{eqnarray}
        
        Thus, for all $N$ sufficiently large, depending only on $\psi$, we have
        \[
        	\frac{1}{(1+\b) N+1}\sum_{j=0}^{(1+\b) N} \int |\psi(T_jx)|\; d\nu(x) \leq 2 ||\psi||_{L^1(\mu)}
        \]
        
        Combining this with~\eqref{J = 1 +beta}, we get for all $N$ sufficiently large,
        \begin{align*} \label{average is small}
        	\frac{1}{\b N} \sum_{j=0}^{\b N-1} \nu(E_{\a,N,j}^\psi)
            &\leq \frac{12||\psi||_{L^1(\mu)}}{\a\b}
        \end{align*}
        
        Thus, there must exist some $j = j(N) \in [0,\b N-1]$ for which the conclusion of the Proposition holds.

    \end{proof}

\section{An Analogue of Birkhoff's Ergodic Theorem}
\label{section Birkhoff for powers of a single transformation}
	This section is dedicated to the proof of Theorem~\ref{Intro-Birkhoff along a subsequence}.
    With the maximal inequality for the non-invariant measure $\nu$ in place (Proposition~\ref{Weak Maximal Ergodic Theorem}), our proof will follow the same lines as Bourgain's approach to deduce the classical Birkhoff theorem from the mean ergodic theorem
    (cf.~\cite{BourgainArithmeticSets}, Section 2-C).

      Recall that for a sequence of sets $A_n$, the $limsup$ of these sets is the set of elements which belong to $A_n$ for infinitely many $n$.
      More precisely,
      \[ \limsup_{n\r\infty} A_n = \bigcap_{n\geq 1} \bigcup_{k\geq n} A_k\]

      The following simple observation will be used repeatedly in what follows.
      \begin{lemma} \label{lemma on lower bound of limsup set}
          Let $X$ be a standard Borel space and let $\mu$ be a probability measure on $X$.
          Let $A_n\subseteq X$ be a sequence of measurable sets such that $\mu(A_n) \geq \a$ for some $\a\in [0,1]$.
          Then,
          \[ \mu\left(\limsup_{n\r\infty} A_n\right) \geq \a  \]

      \end{lemma}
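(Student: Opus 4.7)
The plan is to prove this via continuity of measure from above. Set $B_n = \bigcup_{k \geq n} A_k$. By construction $B_n \supseteq A_n$, so $\mu(B_n) \geq \mu(A_n) \geq \a$ for every $n$. The sequence $(B_n)$ is decreasing and
\[
\bigcap_{n \geq 1} B_n = \bigcap_{n \geq 1} \bigcup_{k \geq n} A_k = \limsup_{n\r\infty} A_n.
\]
Since $\mu$ is a probability measure, in particular $\mu(B_1) < \infty$, so continuity of measure from above applies and yields
\[
\mu\!\left( \limsup_{n\r\infty} A_n \right) = \lim_{n\r\infty} \mu(B_n) \geq \a.
\]

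An alternative route is via the reverse Fatou lemma applied to the indicator functions $\mathbbm{1}_{A_n}$: these are dominated by the constant function $1 \in L^1(\mu)$, and $\limsup_n \mathbbm{1}_{A_n} = \mathbbm{1}_{\limsup_n A_n}$ pointwise, so
\[
\mu\!\left( \limsup_{n\r\infty} A_n \right) = \int \limsup_{n\r\infty} \mathbbm{1}_{A_n} \, d\mu \geq \limsup_{n\r\infty} \int \mathbbm{1}_{A_n} \, d\mu \geq \a.
\]
Either argument is a standard measure-theoretic manipulation; there is no real obstacle, and the only hypothesis genuinely used beyond the lower bound on $\mu(A_n)$ is the finiteness of $\mu$, which is needed for both continuity from above and the domination step in reverse Fatou. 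I would present the continuity-from-above version since it is the most elementary and makes the role of $\mu$ being a probability measure transparent.
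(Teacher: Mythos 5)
Your first argument (continuity from above applied to the decreasing tails $B_n = \bigcup_{k\ge n} A_k$) is exactly the paper's one-line proof, just written out in full; the reverse Fatou alternative is a standard equivalent. Both are correct and there is nothing to add.
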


      \begin{proof}
          This follows from the definition of $\limsup_{n\r\infty} A_n$ as a decreasing intersection and the continuity of the measure $\mu$.
      \end{proof}

      The following Lemma is the main step in the proof of Theorem~\ref{Intro-Birkhoff along a subsequence}.

      \begin{lemma} \label{Birkhoff for finitely many functions}
          Let $(X, \mc{B}, \mu)$ be a standard Borel probability space. Let $T$ be an ergodic measure preserving transformation on $X$. Let $\nu$ be a probability measure on $X$ such that
          \[ \frac{1}{N} \sum_{n=0}^{N-1} T^n_\ast \nu \xrightarrow[N\r\infty]{weak-\ast}\mu\]
          Let $f_1,\dots,f_n \in C_c(X)$.
          Then, for $\nu$-almost every $x\in X$, there exists a sequence $A \subseteq \N$, of upper density $1$,
          depending on $x$ and the functions $f_1,\dots,f_n$,
          such that for all $k = 1,\dots,n$,
            \[ \lim_{\substack{N\in A \\N\r\infty}} 
              \frac{1}{N} \sum_{n=0}^{N-1} f_k(T^nx) = \int f_k \mu \]
      \end{lemma}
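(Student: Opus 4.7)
My plan is to adapt Bourgain's approach to Birkhoff's theorem via maximal inequalities (cf.~\cite{BourgainArithmeticSets}), using Proposition~\ref{Weak Maximal Ergodic Theorem} in place of the classical maximal inequality and invoking the mean ergodic theorem for $(X,\mu,T)$ to obtain a useful coboundary decomposition of each test function.

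First I will reduce to a single function $f \in C_c(X)$ and prove the sharper statement that, for $\nu$-a.e.~$x$ and every $\varepsilon>0$, the bad set
\[ B_\varepsilon(x) := \set{M\in\N : \left|\tfrac{1}{M}\sum_{n=0}^{M-1} f(T^n x) - \bar f\right| > \varepsilon}, \qquad \bar f := \int f\,d\mu, \]
has upper density $0$. Granted this, the finitely-many-functions version is immediate: for $\nu$-a.e.~$x$, intersecting the density-$1$ good sets over $k = 1,\dots,n$ and $\varepsilon = 1/m$ and performing a standard diagonal extraction produces a single sequence $A(x)\subseteq\N$ of upper density $1$ along which every one of the averages converges to its prescribed limit.

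The core tool is the telescoping identity: setting $g_N := \tfrac{1}{N}\sum_{k=0}^{N-2}(N-k-1)\,f\circ T^k$ yields
\[ f - \bar f = (g_N - g_N\circ T) + h_N, \qquad h_N := \tfrac{1}{N}\sum_{k=0}^{N-1} f\circ T^k - \bar f, \]
with $g_N, h_N \in C_b(X)$. Decisively, $\|h_N\|_\infty \leq 2\|f\|_\infty$ holds \emph{uniformly} in $N$, while $\|h_N\|_{L^1(\mu)}\to 0$ by the mean ergodic theorem applied to the ergodic measure $\mu$. Fix $\varepsilon>0$ and choose $\a = \varepsilon/4$, $\b = \varepsilon/(32\|f\|_\infty)$, so that $\a + 2\b\|h_N\|_\infty \leq \varepsilon/2$ for every $N$. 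Since the probability measures $\nu_N := \tfrac{1}{N}\sum_{n=0}^{N-1} T^n_\ast\nu$ converge to $\mu$ in the weak-$\ast$ topology, an inner-regularity argument provides tightness of $\{\nu_N\}$, hence weak convergence against $C_b(X)$; so the proof of Proposition~\ref{Weak Maximal Ergodic Theorem} extends verbatim to test functions in $C_b(X)$. Applied to $\psi = h_N$ at each dyadic scale $M_k = 2^k$, it produces a shift $j_k < \b M_k$ and an exceptional set $E_{N,k}$ with $\nu(E_{N,k}) \leq \eta_N := 12\|h_N\|_{L^1(\mu)}/(\a\b)$. Splitting the Birkhoff sum of $h_N$ at $j_k$ (as in the derivation following Proposition~\ref{Weak Maximal Ergodic Theorem}) bounds $\left|\tfrac{1}{M}\sum_{n=0}^{M-1} h_N(T^n x)\right| \leq \varepsilon/2$ for all $M \in [M_k/2, M_k]$ and $x \notin E_{N,k}$; the coboundary contribution $|g_N(x) - g_N(T^M x)|/M$ is at most $\varepsilon/2$ once $M \geq 4\|g_N\|_\infty/\varepsilon$, so for $k$ sufficiently large, $B_\varepsilon(x)\cap [M_k/2, M_k] = \emptyset$ whenever $x \notin E_{N,k}$.

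A dyadic summation then gives $|B_\varepsilon(x)\cap[1,M_K]| \leq \sum_{k\leq K} \chi_{E_{N,k}}(x)\cdot M_k/2$ (up to a bounded initial segment), so that $\overline{d}(B_\varepsilon(x))$ is dominated, up to a factor of $2$, by $\limsup_K \sum_k \chi_{E_{N,k}}(x)\,2^{k-1-K}$, a quantity bounded by $1$. Reverse Fatou then yields $\int \overline{d}(B_\varepsilon(x))\,d\nu(x) \leq 2\eta_N$; since $N$ was arbitrary and $\eta_N \to 0$, this integral vanishes, completing the single-function case. The principal technical hurdles I anticipate are (i) the extension of Proposition~\ref{Weak Maximal Ergodic Theorem} to test functions in $C_b(X)$ via the tightness of $\{\nu_N\}$, and (ii) the calibration of $\a$ and $\b$: the conclusion $\eta_N \to 0$ hinges crucially on the \emph{uniform} sup-norm bound $\|h_N\|_\infty \leq 2\|f\|_\infty$ provided by the specific Cesàro decomposition, a feature that a generic $L^2$-dense coboundary approximation would not share.
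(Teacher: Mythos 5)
Your outline shares the Bourgain-style skeleton with the paper's proof: invoke the mean ergodic theorem to get $L^1(\mu)$-smallness of a Ces\`aro remainder, apply Proposition~\ref{Weak Maximal Ergodic Theorem} to that remainder, and transfer the bound back to $f$ via the uniform sup-norm control. Your telescoping identity $f-\bar f = (g_N - g_N\circ T) + h_N$ is correct and plays the same role as the paper's identity $A_N(A_M\psi) = A_N\psi + O_M(1/N)$. Your observation that tightness of the averaged measures lets one test against $C_b(X)$ is also fine.

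However, there is a genuine gap in the final step, and it is serious enough to sink the argument as written. You aim for the stronger conclusion that $\overline d(B_\varepsilon(x))=0$ for $\nu$-a.e.\ $x$, and you try to reach it by writing $\overline d(B_\varepsilon(x)) \leq 2\limsup_K a_K(x)$ with $a_K(x) := \sum_{k\leq K}\chi_{E_{N,k}}(x)\,2^{k-1-K}$, noting $\int a_K\,d\nu \leq \eta_N$ for each $K$, and then invoking ``reverse Fatou'' to get $\int\limsup_K a_K\,d\nu \leq 2\eta_N$. Reverse Fatou points the other way: under a dominating function it gives $\limsup_K \int a_K\,d\nu \leq \int \limsup_K a_K\,d\nu$, a \emph{lower} bound on $\int\limsup_K a_K\,d\nu$, not an upper bound. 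Concretely, if $x\in E_{N,k}$ for infinitely many $k$, then $\limsup_K a_K(x)\geq \tfrac12$; and since $\nu(E_{N,k})\leq \eta_N$ only bounds each set individually, the ``rotating sets'' phenomenon allows $\nu(\limsup_k E_{N,k})$ to be as large as $1$. So the chain of inequalities does not yield $\int\overline d(B_\varepsilon(x))\,d\nu\leq 2\eta_N$, and you cannot send $N\to\infty$. Note also that what you are trying to prove is strictly stronger than Lemma~\ref{Birkhoff for finitely many functions}: the lemma produces a \emph{good} set $A(x)$ of upper density $1$ (so the complement has lower density $0$), not that $B_\varepsilon(x)$ has upper density $0$.

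The paper circumvents this exact difficulty by keeping track of the \emph{good} sets rather than the bad ones, and by accepting an $\varepsilon$-loss that it removes at the end. With $G_N^\varepsilon$ the complement of the shifted exceptional set, Lemma~\ref{lemma on lower bound of limsup set} gives $\nu(\limsup_N G_N^\varepsilon)\geq 1-O(\varepsilon)$; crucially, each $y$ in this set belongs to $G_{N_i}^\varepsilon$ along an infinite (not density-one) sequence of scales $N_i$, and the union of the intervals $[(\sqrt\varepsilon+\beta)N_i,(1+\beta)N_i]$ already has upper density $\geq (1-\sqrt\varepsilon)/(1+\beta)$ because upper density is a $\limsup$. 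Then $\varepsilon_i = 1/i^2$ makes the bad sets $B^{\varepsilon_i}$ summable in $\nu$-measure, so Borel--Cantelli puts a.e.\ $y$ in $\limsup_N G_N^{\varepsilon_i}$ for all large $i$, and a diagonalization over $i$ glues the density-$(1-O(\sqrt{\varepsilon_i}))$ sets into a single set $A(y)$ of upper density $1$. You would need to restructure your argument along these lines: rather than trying to drive $\int\overline d(B_\varepsilon(x))\,d\nu$ to zero for a single $\varepsilon$, accept a density defect of order $\sqrt\varepsilon$, Borel--Cantelli over a summable sequence $\varepsilon_i\to 0$, and diagonalize.
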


    Let us deduce Theorem~\ref{Intro-Birkhoff along a subsequence} from this Lemma first.

	\subsection{Proof of Theorem~\ref{Intro-Birkhoff along a subsequence}}
    
    Let $\mc{F}=\set{f_k \in C_c(X): k\in\N}$ be an enumeration of a countable set of continuous functions which are dense in $C_c(X)$ in the uniform norm.
	Then, it suffices to show that for $\nu$ almost every $x\in X$, there exists a sequence $A(x) \subseteq \N$, of full upper density such that for all $f\in \mc{F}$,
    \begin{align} \label{theorem conclusion}
    	\lim_{\substack{N\r\infty\\N\in A(x)}} \frac{1}{N}\sum_{k=1}^N f(T^kx) = \int f \; d\mu
    \end{align}
    
    For each $n\in \N$, let $\mc{F}_n = \set{f_1,\dots, f_n}\subset \mc{F}$.
    By Lemma~\ref{Birkhoff for finitely many functions}, for each $n$, there exists a set $X_n$ with $\nu(X_n) = 1$, such that for all $x\in X_n$, there exists a sequence $A(x,\mc{F}_n) \subseteq \N$, along which the limit in~\eqref{theorem conclusion} holds for all $f \in \mc{F}_n$.
    
    Let $Y = \cap_n X_n$. Then, $\nu(Y) = 1$. Let $y\in Y$.
    We will build a sequence $A(y)$ by induction from the sequences $A(y,\mc{F}_n)$ via a standard argument which we will make use of several times later.
    
    For each $n\in \N$, let $N_n\in \N$ be such that for all $N \geq N_n$ with $N\in A(y,\mc{F}_n)$, and all $f\in \mc{F}_n$,
    \begin{align} \label{choice of N_n}
    \left| \frac{1}{N}\sum_{k=1}^N f(T^ky) - \int f \; d\mu \right| &\leq \frac{1}{n}
    \end{align}

    Let $M_1 =N_1 $. If $M_j$ has been defined, let $M_{j+1}$ be such that the following holds
    \begin{align*}
      \frac{|A(y,\mc{F}_{j}) \cap [1,M_{j+1}]|}{M_{j+1}} &\geq 1 - \frac{1}{j}\\
      \frac{M_{j}}{M_{j+1}} &\leq \frac{1}{j} \\
       M_{j+1} &\geq N_{j+1}
    \end{align*}
    
    Note that the above implies that
    \[ \frac{|A(y,\mc{F}_{j}) \cap [M_j,M_{j+1}]|}{M_{j+1}} \geq 1 - \frac{2}{j}\]
    
    Now, define the sequence $A(y)$ as follows:
    \begin{align*}
    	A(y) = \bigcup_{j=1}^\infty A(y,\mc{F}_{j}) \cap [M_j,M_{j+1}]
    \end{align*}
    
    Thus, by construction, the upper density of $A(y)$ is equal to $1$.
    Now, let $f\in \mc{F}$. Then, $f\in \mc{F}_n$ for all $n\geq n_0$, for some $n_0\in\N$.
    Thus, for all $N \geq M_{n_0}$ such that $N\in A(y)$, there exists $j\geq n_0$, such that $N \in A(y,\mc{F}_j)\cap [M_j,M_{j+1}]$.
    Thus, since $M_j\geq N_j$, by~\eqref{choice of N_n}, the conclusion follows.


	\subsection{Proof of Lemma~\ref{Birkhoff for finitely many functions}}

		For any function $\psi$ and for every $N\geq 1$, let $\mu(\psi) = \int \psi \;d\mu$ and let 
    	\[A_N(\psi) =  \frac{1}{N} \sum_{n=0}^{N-1}\psi \circ T^n \]
        
        Let $\e \in (0,1)$.
        By Von Neumann's mean ergodic theorem, for all $k$, 
    	\[ A_N(f_k) \xrightarrow{L_1(\mu)} \mu(f_k) \]
        
        Hence, we can find some $M\gg 1$, for all $k \leq n$,
    	\[ \int |A_M(f_k) -\mu(f_k)| \; d\mu < \frac{\e^3}{n} \]
        
        Let $\b = \frac{\e}{C}$, where
        \[C = \max_{1\leq k\leq n} 2||f_k||_{L^\infty} +1\]
        
        Let $g_k =A_M(f_k) -\mu(f_k)$. Note that $||g_k||_\infty \leq C$.
        For all $k$ and for every $N\in \N$, define 
        \[ E_{\e,N}^k = \set{x\in X: \sup_{1\leq m\leq N } \left|\frac{1}{n} \sum_{l=0}^{m-1} g_k(T^l x)\right| > \e   } \]
		
        Then, by the analogue of the maximal inequality, Proposition~\ref{Weak Maximal Ergodic Theorem}, applied to $g_k$,
        the sequence of transformations $T_l = T^l$
        and $E_{\e,N,j} = T^{-j}E_{\e,N}^k$, for all $N$ sufficiently large, depending on $\e$, there exists $j_{N,k}\in [0,\b N]$ such that
        \begin{align} \label{measure of exceptional set}
            \nu(T^{-j_{N,k}} E_{\e,N}^k) \leq \frac{12 ||g_k||_{L^1(\mu)}}{\e\b} 
            \leq \frac{12C \e}{n}
        \end{align}
        
        Let $G_{N,k}^\e = X \setminus T^{-j_{N,k}} E_{\e,N}^k$ and let
        $  G_N^\e = \bigcap_{k=1}^n G_{N,k}^\e  $.
        Thus, by~\eqref{measure of exceptional set} and Lemma~\ref{lemma on lower bound of limsup set},
        \begin{align} \label{measure of limsup of G_N,epsilon}
        	\nu\left( \limsup_{N\r\infty} G_N^\e\right) \geq 1- 12C\e
        \end{align}
        
        Now, let $y\in G_N^\e$ and let $Q\in [\sqrt{\e} N,N]$.
        Then, for all $k=1,\dots,n$,
        by definition of $E_{\e, N}^k$ and our choice of $\beta$,
        \begin{align} \label{1 + beta computation}
            \left|A_{(Q+\b N)}(g_k)(y) \right| 
                &\leq \left| \frac{Q}{Q+\b N} \frac{1}{Q}\sum_{l=j_{N,k}}^{j_{N,k}+Q-1} g_k(T^ly)
                \right|
                + \frac{\b N ||g_k||_{L^\infty}}{Q+\b N}
                 \nonumber \\
                &\leq \left| A_Q(g_k)(T^{j_{N,k}}y)\right|
                + \frac{C\b}{\sqrt{\e}} \nonumber \\
                &\leq \e + \sqrt{\e} \leq 2\sqrt{\e}
        \end{align}
        
        Hence, in particular, for any $y \in \limsup_N G_N^\e$, there exists a sequence $N_i \r \infty$ for which~\eqref{1 + beta computation} holds for all $Q\in [\sqrt{\e} N_i, N_i]$ and for all $k = 1,\dots,n$.
        Define the following sequence for $y\in \limsup_N G_N^\e$
        \begin{align} \label{definition of A(y,epsilon)}
        	A(y,\e) = \bigcup_{N_i:y\in G_{N_i}^\e} [(\sqrt{\e}+\b) N_i, (1+\b) N_i] \cap \N
        \end{align}
        
        Now, a simple computation shows that for all $N$, and any function $\psi$,
         \begin{align} \label{average of average}
            A_N(A_M(\psi)) &= \frac{1}{NM} \sum_{n=0}^{N-1}\sum_{m=0}^{M-1} \psi \circ T^{n+m} \nonumber \\
            &= A_N(\psi) + O_M\left(\frac{||\psi||_{L^\infty}}{N}\right)
         \end{align}

         Combining~\ref{1 + beta computation} and~\ref{average of average} implies that for every $y\in \limsup_N G_N^\e$,
         all $k\leq n$ and for all $Q\in A(y,\e)$ such that $Q \gg M$,
        \begin{align} \label{limsup inequality}
            \left| A_{Q}(f_k)(y) - \mu(f_k)\right| \leq 3\sqrt{\e} 
        \end{align}

        Note that the choice of $\e$ in the above considerations was arbitrary.
		Hence, we may consider sets $B^{\e_i} = X \setminus \limsup_N G_N^{\e_i}$, where $\e_i = 1/i^2$.
        Then, by~\eqref{measure of limsup of G_N,epsilon} and the Borel-Cantelli lemma applied to $B^{\e_i}$,
        \begin{align} \label{desired set of full measure}
        	\nu\left(X\setminus \limsup_{i\r\infty} B^{\e_i}\right) = 1
        \end{align}

		Let $y\in X\setminus \limsup_i B^{\e_i}$.
        We claim that there exists a sequence $A(y) \subseteq \N$ of full upper density, such that for all $k\leq n$,
        \begin{align}  \label{lemma conclusion}
        	\limsup_{\substack{N\r\infty\\ N\in A(y)}} \left| A_{N}(f_k)(y) - \mu(f_k)\right| = 0
        \end{align}
        By~\eqref{desired set of full measure}, this will conclude the proof.
        
        Since $ y \in \limsup_N G_N^{\e_i}$ for all $i$ sufficiently large, we have sequences $A(y,\e_i)$ as before for all $\e_i$ sufficiently small.
        Without loss of generality, we may assume this holds for all $\e_i$.
        Note that by~\eqref{definition of A(y,epsilon)}, the upper density of $A(y,\e_i)$ is at least $\frac{1-\sqrt{\e_i}}{1+\e_i/C}$.
        
        We build the sequence $A(y)$ from $A(y,\e_i)$ by induction as follows.
        Let $N_i \in A(y,\e_i)$ be such that~\eqref{limsup inequality} holds for all $k\leq n$ and all $Q\geq N_i$.
        Let $M_1 = N_1$.
        If $M_j$ is defined, let $M_{j+1} \in \N$ be such that
        \begin{align*}
          \frac{|A(y,\e_{j}) \cap [1,M_{j+1}]|}{M_{j+1}} &\geq \frac{1-\sqrt{\e_{j}}}{1+\e_{j}/C} - \frac{1}{j}\\
          \frac{M_{j}}{M_{j+1}} &\leq \frac{1}{j} \\
          M_{j+1} &\geq N_{j+1}
    	\end{align*}
    
        This in particular, implies that
        \[	\frac{|A(y,\e_{j}) \cap [M_j,M_{j+1}]|}{M_{j+1}} \geq \frac{1-\sqrt{\e_{j}}}{1+\e_{j}/C} - \frac{2}{j} \]
        
        Now, define the sequence $A(y)$ as follows:
        \[  A(y) = \bigcup_{j=1}^\infty ( A(y,\e_{j}) \cap [M_j,M_{j+1}] ) \]
        
        Thus, since $\e_j \r 0$, the upper density of $A(y)$ is equal to $1$.
        Moreover, by~\eqref{limsup inequality} and by choice of $M_j$, we have that~\eqref{lemma conclusion} holds as desired.

\section{An Analogue of Birkhoff's Theorem for Sequences of Transformations}
\label{section Birkhoff for sequences of transformations}

	In this section, we prove Theorem~\ref{Intro-Birkhoff for sequences of transformations}.
    We will use similar ideas to those used in the proof of Theorem~\ref{Intro-Birkhoff along a subsequence} by applying the weak-type maximal inequality to a carefully chosen set of continuous functions capturing the structure of the ergodic invariant measures under the transformation $S$.

  	First, we make some standard reductions.
    Note that since $X$ is locally compact and second countable, by passing to the one point compactification and extending all the transformations on $X$ trivially to the point at infinity, we may assume that $X$ is in fact compact.
    
    The set $Z$ in the assumption will then be enlarged to include the point at infinity since the dirac measure at that point will be an ergodic invariant measure for $S$.
    Also, since $X$ is now assumed compact, the space of probability measures on it is weak-$\ast$ compact and thus we can always find limit points of infinite sequences.

    \begin{proof}[Proof of Theorem~\ref{Intro-Birkhoff for sequences of transformations}]
    Let $\e \in (0,1)$ be fixed and let $\e_n = \e^2/4^n$ for $n\in \N$.
    Write $Z = \cup_n K_n$, where $K_n$ is compact and $K_n \subseteq K_{n+1}$ for each $n$.
    By regularity of the measure $\mu$, since $\mu(Z) = 0$, there exists an open set $U_n$ containing $Z$, such that $\mu(U_n\setminus K_n) < \e_n$, for each $n$.
    
    Moreover, by Urysohn's lemma, we can find a continuous function $0\leq f_n\leq 1$ such that $f_n|_{K_n} \equiv 1$ and $f_n \equiv 0$ on $X\setminus U_n$.
    Thus,
    \begin{align*}
    	||f_n||_{L^1(\mu)}= \int f_n(x) \;d\mu(x) < \e_n
    \end{align*}
    
	Let $n$ be fixed. For each $j\in \N$, $k\leq n$ and $\a\in \R$, define the following set
    \[
    	E_{\a,N,j}^k = \set{x\in X: \sup_{1\leq M\leq N } \frac{1}{M} \sum_{m=j+1}^{j+M} f_k(T_m x) > \a   }
    \]
    
    Applying the analogue of the maximal inequality, Proposition~\ref{Weak Maximal Ergodic Theorem}, with $f_k$, $\a_k = \b_k = \e_k^{1/4}$, we get that for all $N$ sufficiently large, depending on $f_k$, there exists $j_{N,k}\in [0,\b_k N]$ such that
    \begin{align} \label{measure of exceptional set for one N}
    	\nu\left(E_{\a_k,N,j_{N,k}}^k \right) \leq \frac{12 ||f_k||_{L^1(\mu)}}{\a_k\b_k} \ll \e_k^{1/2} 
    \end{align}
    for each $k\leq n$.
    
    Let $G_{N,k} = X \setminus E_{\a_k,N,j_{N,k}}^k$. Let $y\in G_{N,k}$ and let $Q\in [\e_k^{1/8} N,N]$.
    Then, by definition of $E_{\a_k,N,j_N}^k$,
    \begin{align} \label{estimate of ergodic sum}
    	\frac{1}{Q+\b_k N} \sum_{l=1}^{Q+\b_k N} f_k(T_l y) 
        	&\leq \frac{Q}{Q+\b_k N} \frac{1}{Q}\sum_{l=j_{N,k}+1}^{j_{N,k}+Q} f_k(T_ly)
            + \frac{\b_k N ||f_k||_{L^\infty}}{Q+\b_k N}
             \nonumber \\
        	&\leq  \a_k + \frac{\b_k}{\e_k^{1/8}} \leq 2\e_k^{1/8} 
    \end{align}
    
    Now, for each $N\gg 1$, depending on $n$, define the following set
    \begin{align} \label{definition of the set V}
    	V_{N,n} = \bigcap_{k=1}^n G_{N,k}
    \end{align}
    and let $W_n = \limsup_N V_{N,n}$.
    The sets $W_n$ have the following properties:
    \begin{itemize}
    	\item By~\eqref{measure of exceptional set for one N}, since $\nu(G_{N,k}) \geq 1- \e/2^k$, $k=1,\dots,n$, we have $\nu(V_{N,n}) \geq 1 - \e$ for all $N \gg 1$.
              In particular,
                \begin{align} \label{measure of Wn}
                    \nu\left(W_n\right) 
                    =  \nu\left( \limsup_{N\r\infty} V_{N,n} \right) \geq 1 - \e
                \end{align}
         \item For each $y\in W_n$, by~\eqref{estimate of ergodic sum} and~\eqref{definition of the set V}, and noting that $\e>\e_k$, there exists a sequence $A(y,n)\subseteq \N$ defined by
         	\begin{align} \label{the sequence A(y,n)}
         		A(y,n) = \bigcup_{N_i:y\in V_{N_i,n}} [(\e^{1/4}+\e^{1/8})N_i, N_i] \cap \N
         	\end{align}
			such that for all $k = 1,\dots,n$ and for all $Q \in A(y,n)$,
            \begin{align} \label{ergodic sum along A(y,n)}
            	\frac{1}{Q} \sum_{l=1}^{Q} f_k(T_l y) \leq 2\e_k^{1/8} 
            \end{align}
    \end{itemize}

    Let $W = \limsup_n W_n$.
    Then, by~\eqref{measure of Wn},
    \begin{align} \label{measure of W}
    	\nu(W) \geq 1-\e
    \end{align}
    
    Let $y \in W$. Then, there exists a sequence $n_i \r \infty$, such that $y \in W_{n_i}$ for all $i$.
    We will construct a sequence $A(y)$ from the sequences $A(y,n_i)$ defined in~\eqref{the sequence A(y,n)} as follows.
    Let
    \[ \eta = \e^{1/4}+\e^{1/8}  \]
    
    First, we define a sequence $N_i$ by induction. Let $N_0 = 1$. If $N_j$ has been defined, let $N_{j+1}$ be such that the following holds
    \begin{align*}
    	\frac{|A(y,n_{j+1}) \cap [1,N_{j+1}]|}{N_{j+1}} &\geq 1-2\eta\\
        \frac{N_{j}}{N_{j+1}} &\leq \eta
    \end{align*}
    This is possible since the sequences $A(y,n)$ have upper density at least $1-\eta$.
    These conditions imply that
    \begin{align} \label{density of A(y)}
    	\frac{|A(y,n_{j+1}) \cap [N_j,N_{j+1}]|}{N_{j+1}} &\geq 1-3\eta
    \end{align}
    
    Now, define the sequence $A(y)$ as follows:
    \begin{align} \label{definition of A(y)}
    	A(y) = \bigcup_{j=0}^\infty A(y,n_{j+1}) \cap [N_j,N_{j+1}]
    \end{align}
    
    Thus, by~\eqref{density of A(y)}, we get
    \begin{align}
    	\limsup_{N\r\infty} \frac{|A(y)\cap [1,N]|}{N} \geq 1-3\eta
    \end{align}
    
    We claim that
    \begin{align} \label{equidsitribution}
    	\lim_{\substack{N\r\infty\\ N\in A(y)}} \frac{1}{N} \sum_{n=1}^{N} \d_{T_ny} = \mu
    \end{align}
    
    Let $\l_\infty^y$ be any weak-$\ast$ limit of the sequence $\frac{1}{N} \sum_{n=1}^{N} \d_{T_ny}$, $N\in A(y)$.
    First, we claim that $\l_\infty^y(Z) =0$. Suppose otherwise.
    Then, since $Z = \bigcup_i K_i$ and $K_i \subseteq K_{i+1}$ for all $i$, there exists some $i_0$ such that for all $i> i_0$:
    \[\l_\infty^y(K_i) \geq \l_\infty^y(K_{i_0}) > 0\]
    
    Fix some $i>i_0$. By definition of the functions $f_i$, $\l_\infty^y(f_i) \geq \l_\infty^y(K_i)$.
    Then, for all $n_j \geq i$ and for all $N \in A(y,n_{j+1}) \cap [N_j,N_{j+1}] \subset A(y)$, by~\eqref{ergodic sum along A(y,n)}, we get
    \[ \l_\infty^y(f_i) \leq 2\e_i^{1/8} \]
    
    In particular, we get that $\l_\infty^y(K_{i_0}) \leq 2 \e_i^{1/8}$.
    But, this applies to $i>i_0$.
    Thus, since $\e_i \r 0$, we get that $\l_\infty^y(K_{i_0}) =0$, a contradiction.
    
    Next, by our hypothesis, (after possibly intersecting $W$ with a set of full measure), $\l_\infty^y$ is $S$-invariant.
    However, all the ergodic $S$-invariant measures different from $\mu$ live on $Z$ to which $\l_\infty^y$ assigns $0$ mass.
    Thus, by the ergodic decomposition, we get that $\l_\infty^y = \mu$.
    Hence, the sequence $\frac{1}{N} \sum_{n=1}^{N} \d_{T_ny}$, $N\in A(y)$ has $\mu$ as its only weak-$\ast$ limit point as desired.
    
    Thus far, we proved that for all $y\in W$, a set of $\nu$ measure at least $1-\e$, there exists a sequence $A(y)$ of upper density at least $1-3\eta$ such that~\eqref{equidsitribution} holds. 
    Since $\e$ was arbitrary, the conclusion of the theorem holds $\nu$ almost everywhere as desired.

  \end{proof}

\section{Existence of Ratner Sequences}
\label{section unipotent invariance}
    In this section, we prove a general criterion for the existence of Ratner sequences, Theorem~\ref{thrm: existence of Ratner sequences in 1-param unipotent subgroups}.
    Let us fix some notation.
    Let $G$ be a semisimple Lie group and let $H$ be a closed connected Lie subgroup of $G$.
    Let $\G$ be a lattice in $G$ and assume that the orbit $H \G$ is closed in $G/\G$ and supports an $H$-invariant probability measure $\mu_H$.
    Let $Z_G(H)$ denote the centralizer of $H$ in $G$.
    Let $\mf{g}$ and $\mc{h}$ denote the Lie algebras of $G$ and $H$ respectively.
    For $g\in G$, let $Ad(g)$ denote the linear transformation on the Lie algebra of $G$ induced by the adjoint action of $g$.
    Let $\norm{\cdot}$ denote some
    
    Recall that a sequence of elements $g_n \in G$ is said to be a Ratner sequence if there exists a one-parameter unipotent subgroup $W < G$ such that for $\mu_H$-almost every $x \in G/\G$, any limit point of the empirical measures
    $\frac{1}{N}\sum_{n=1}^N \d_{g_n x}$ is invariant by $W$.

\subsection{$SL_2(\R)$ Representations and Unipotent Invariance}
	The first key step in the proof of Theorem~\ref{thrm: existence of Ratner sequences in 1-param unipotent subgroups} is Proposition~\ref{propn: Jacobson-Morozov} below.
    It relies on the representation theory of embedded copies of $SL_2(\R)$ inside semisimple Lie groups and its proof is inspired by Ratner's H-principle appearing in the proof of her measure classification theorem.
    
    The statement roughly says that it is possible to change the starting point of a unipotent orbit of a group $U$ in a direction parallel to $H$ so that the $2$ unipotent orbits differ roughly by a unipotent element in the centralizer of $U$.
    The following is the precise statement.

	\begin{proposition} \label{propn: Jacobson-Morozov}
     Let $U = \set{u_t: t\in \R}$ be an $Ad$-unipotent $1$-parameter subgroup of $G$ such that 
    $U\not\subset Z_G(H)$.
    Then, for every sequence $t_n \r \infty$,
    there exists a sequence $v_n \in \mf{h}=Lie(H)$ satisfying the following: for all $n$ sufficiently large
    \begin{eqnarray*}
    	\norm{v_n} \ll \frac{1}{\norm{u_{t_n}\vert_\mf{h}}}, \qquad
        \exp(Ad(u_{t_n})(v_n)) \xrightarrow{n\r\infty} u
    \end{eqnarray*}
    where $u\in Z_G(U)$ is a non-trivial $Ad$-unipotent element of $G$.
	\end{proposition}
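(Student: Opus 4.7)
The plan is to reduce the proposition to the representation theory of $\mf{sl}_2(\R)$ acting on $\mf{g}$ via the adjoint representation. Write $u_t = \exp(tX)$ for some $ad$-nilpotent $X \in \mf{g}$. By the Jacobson--Morozov theorem, I would embed $X$ in an $\mf{sl}_2$-triple $\{X, Y, H_0\}$ inside $\mf{g}$, so that $ad(H_0)$ is semisimple with integer eigenvalues and $\mf{g}$ decomposes, under the adjoint $\mf{sl}_2$-action, into a direct sum of finite-dimensional irreducible subrepresentations $V_\alpha$, on each of which $ad(H_0)$ has weights $-m_\alpha, -m_\alpha + 2, \dots, m_\alpha$.

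For any $v \in \mf{g}$, the formula $Ad(u_t) v = \exp(t \cdot ad(X)) v = \sum_{k \geq 0} \tfrac{t^k}{k!} (ad X)^k v$ shows that $t \mapsto Ad(u_t) v$ is polynomial of degree $k(v) := \max\{k : (ad X)^k v \neq 0\}$. The next step is to set
\[ d := \max_{v \in \mf{h}} k(v), \]
which is at least $1$ because the hypothesis $U \not\subset Z_G(H)$ forces $ad(X)|_\mf{h} \neq 0$, and to fix $v \in \mf{h}$ achieving this maximum. The top-degree coefficient of $Ad(u_t) v$ is
\[ w_0 := \frac{1}{d!}(ad X)^d v, \]
and by maximality of $d$ we have $(ad X)^{d+1} v = 0$, so $w_0 \in \ker(ad X) = \mathrm{Lie}(Z_G(U))$.

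The key structural step is to show $w_0$ is $ad$-nilpotent and non-zero. Decomposing $v = \sum_\alpha v^\alpha$ along the $V_\alpha$ and each $v^\alpha$ further into $ad(H_0)$-weight components, the component $(ad X)^d v^\alpha$ is non-zero only for those $\alpha$ for which the lowest $H_0$-weight appearing in $v^\alpha$ is $-m_\alpha + 2j_\alpha$ with $m_\alpha - j_\alpha = d$; in each such $V_\alpha$, this iterated bracket produces a highest weight vector, of $ad(H_0)$-weight $m_\alpha \geq d \geq 1$. Consequently every nonzero component of $w_0$ has strictly positive $H_0$-weight, so $ad(w_0)$ strictly raises $H_0$-weights; as the weights on $\mf{g}$ are bounded, $ad(w_0)$ is nilpotent, i.e.\ $w_0$ is an $ad$-nilpotent element of $\mf{g}$. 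Non-vanishing of $w_0$ is built into the definition of $d$.

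Finally, I would set $v_n := v/t_n^d$. Since the maximum polynomial degree of $Ad(u_t)$ on $\mf{h}$ is exactly $d$, one has $\norm{Ad(u_{t_n})|_\mf{h}} \asymp t_n^d$ for $t_n \gg 1$, which gives $\norm{v_n} \ll 1/\norm{Ad(u_{t_n})|_\mf{h}}$. The expansion $Ad(u_{t_n}) v_n = w_0 + O(1/t_n)$ yields $\exp(Ad(u_{t_n}) v_n) \to \exp(w_0) =: u$, and the previous step shows $u$ is unipotent in $G$, lies in $Z_G(U)$ because $w_0$ commutes with $X$, and is non-trivial because $\exp$ is injective on the nilpotent cone of a semisimple Lie algebra. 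The main technical obstacle is precisely the verification that $w_0$ is genuinely $ad$-nilpotent (hence $u$ is unipotent in $G$ and not merely an abstract limit), and this is where the Jacobson--Morozov framework and the maximality of $d$ on $\mf{h}$ are essential.
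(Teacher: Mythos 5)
Your proposal is correct and follows essentially the same route as the paper: Jacobson--Morozov, the $\mf{sl}_2(\R)$ weight decomposition of $\mf{g}$, choice of $v\in\mf{h}$ maximizing the polynomial degree $d$ of $t\mapsto Ad(u_t)v$, and the rescaling $v_n=v/t_n^d$, with the limit being a sum of highest-weight vectors fixed by $Ad(u_t)$. The only (cosmetic) difference is how unipotence of the limit is checked: you argue structurally that $w_0=\tfrac{1}{d!}(\mathrm{ad}\,X)^d v$ has strictly positive weights and is therefore $\mathrm{ad}$-nilpotent, whereas the paper notes that $\exp(v_n)\to e$ and that conjugation preserves eigenvalues, so the limiting element is $Ad$-unipotent.
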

    
    \begin{proof}
    Let $X\in \mf{g}$ be such that $u_t = \exp(tX)$ for all $t\in \R$.
    Since $u_t$ is $Ad$-unipotent, $X$ is an $ad$-nilpotent element of $\mf{g}$.
    That, $ad(X)$ is a nilpotent linear transformation of $\mf{g}$.
    
    Since $\mf{g}$ is a semisimple Lie algebra, by the Jacobson-Morozov theorem, $X$ may be extended to an $\mf{sl}_2$-triple.
    That is there exists $Y, H \in \mf{g}$ such that the following relations hold:
    \begin{align*}
    	[H,X] = 2X, \qquad
        [H,Y] = -2Y, \qquad
        [X,Y] = H
    \end{align*}
    Hence, the Lie subalgebra $\mf{f}$ generated by $X,Y$ and $H$ is isomorphic to the Lie algebra $\mf{sl}_2(\R)$.
    Since $\mf{f}$ is semisimple, $\mf{g}$ decomposes into irreducible representations under the adjoint action of $\mf{f}$ as follows:
    \[ \mf{g} = V_1 \oplus \cdots \oplus V_s \]
    For $1\leq i \leq s$, let $\pi_i: \mf{g} \r V_i $ denote the associated projections and note that $Ad(u_t)$ commutes with $\pi_i$ for all $i$.
    Let $v\in \mf{h}$.
    
    Let $1\leq i\leq s$ be such that $\pi_i(v)$ is not fixed by $Ad(u_t)$.
    Let $n_i \in \N$ be such that
    \[ \mathrm{dim}(V_i) = n_i + 1 \]
    By the standard description of irreducible $\mf{sl}_2(\R)$ representations, $V_i$ decomposes into $1$ dimensional eigenspaces for the action $H$ as follows:
    \[ V_i = W_0^{(i)} \oplus W_1^{(i)} \oplus \cdots \oplus W_{n_i}^{(i)}  \]
    so that for each $0\leq l \leq n_i$ and every $w \in W_l^{(i)}$, we have
    \[ H \cdot w = (n_i-2l) w \]
    Let $q_l: V_i \r W_l^{(i)}$ denote the associated projections.
    Let $\set{w_l^{(i)}:0\leq l\leq n_i}$ denote a basis of $V_i$ consisting of eigenvectors of $H$ and write
    \[ \pi_i(v) =  \sum_{l=0}^{n_i} c_{l}^{(i)} w_l^{(i)} \]
    Note that for each $l$, we have that
    \begin{equation*}
    	Ad(u_t)\cdot w_l^{(i)} =\sum_{k=0}^l \binom{l}{k} t^{l-k}w_k^{(i)}
    \end{equation*}
    In particular, we get the following
    \begin{equation} \label{eqn:highest weight coefficient}
     q_0 ( \pi_i(Ad(u_t)(v))) = q_0 ( Ad(u_t)(\pi_i(v)) ) = 
     	 \sum_{k=0}^{n_i}	c_k^{(i)}  t^{k} w_{0}^{(i)}
    \end{equation}
    Note also that the degree of the polynomial appearing in the coefficient of $q_{l} ( \pi_i(Ad(u_t)(v)))$ for any $l>0$ is strictly less than the degree of the polynomial in~\eqref{eqn:highest weight coefficient}.
    Let
    \[ d_i(v) = \max\set{0\leq k\leq n_i: c_k^{(i)} \neq 0} \]
    And, define the following natural number
    \[ d_\mf{h} = \max\set{d_i(v): 1\leq i \leq s, v\in \mf{h}} \]

    By assumption, we have that $U \not\subset Z_G(H)$.
    Thus, $d_\mf{h} \neq 0$.    
    In particular, we can find $ v\in \mf{h}$ and $1\leq i\leq s$ such that 
    \[ d_i(v) = d_\mf{h}  \]
    
    Now, let $t_n \in \R\setminus\set{0}$ be a sequence tending to infinity and define $v_n$ to be
    \[ v_n = \frac{1}{t_n^{d_\mf{h}} } v \]
    Thus, for each $i$ with $d_i(v) = d_\mf{h}$, by~\eqref{eqn:highest weight coefficient}, we get that
    \[ q_0 ( \pi_i(Ad(u_{t_n})(v_n))) = (c_{d_\mf{h}}^{(i)} + O(1/t_n)) w_{0}^{(i)}  \]
    And, for each $l >0$, we have that
    \[q_{l} ( \pi_i(Ad(u_{t_n})(v_n))) \xrightarrow{n\r\infty} 0 \]
    
    In particular, we get that
    \begin{equation} \label{eqn: limit vector}
    Ad(u_{t_n})(v_n) \xrightarrow{n\r\infty} 
    	\sum_{\substack{1\leq i\leq s\\ d_i(v) = d_\mf{h} }} c_{d_\mf{h}}^{(i)}  w_{0}^{(i)}  \neq 0
    \end{equation} 
    
    Next, since $v_n \r 0$, $\exp(v_n)$ converges to the identity element in $G$.
    Hence, all the eigenvalues of the linear transformation $Ad(\exp(v_n))$ tend to $1$.
    Thus, since conjugation doesn't change eigenvalues, we get that 
    $Ad(u_{t_n})(\exp(v_n))$ converges to an $Ad$-unipotent element of $G$ which is non-trivial by~\eqref{eqn: limit vector}.
    Since $Ad(u_t)$ fixes $w_0^{(i)}$ for all $i$, by~\eqref{eqn: limit vector}, the limiting element belongs to the centralizer of $U$.
    
    Finally, note that equation~\eqref{eqn:highest weight coefficient} and the definition of $d_\mf{h}$ imply that for all $t>0$ sufficiently large
    \begin{equation} \label{eqn: norm of u_t is polynomial in t}
     \norm{Ad(u_{t})\vert_\mf{h} } = O(t^{d_\mf{h}})
    \end{equation}
    This completes the proof.
    \end{proof}
    
    The following example demonstrates the above Proposition in the concrete set up of $G = SL_2(\R)$.
    \begin{example} \label{sl(2,R) example} Let $G = SL(2,\R)$ and $H=K = SO(2)$. Let $t_n \r +\infty$ be a sequence.
  Let $g_n$ be the following sequence:
  \[ g_n = \begin{pmatrix}
            1 & t_n \\ 0 & 1
            \end{pmatrix}
  \]
  
  Let $k_\theta \in K$. Then, 
  \[
  	g_n k_\theta g_n^{-1} =
    	\begin{pmatrix}
            \cos(\theta) -t_n\sin(\theta) & (t^2_n+1)\sin(\theta) \\ 
            - \sin(\theta) & \cos(\theta) +t_n\sin(\theta)
        \end{pmatrix}
  \]
  
  Let $\a \in (0,1)$ be a fixed real number. For all large $n$, let $\theta_n$ be such that $t_n^2 \sin(\theta_n) = \a$.
  Then, as $n\r\infty$, $\theta_n \r 0$ and $t_n\sin(\theta_n) \r 0$.
  Hence, we get
  \[ g_n k_{\theta_n} g_n^{-1} \r u(\a) = 
  	\begin{pmatrix}
        1 & \a \\ 0 & 1
     \end{pmatrix} \neq id
  \]
  \end{example}

\subsection{Decay of Correlations}
\label{section: decay of correlations}

	Let $\G$, $H$ and $G$ be as above and define
    \[ g_n = u_{t_n} \]
    Let $\vp \in C_c^\infty(G/\G)$. 
    For each $n$, define the following function on $H\G/\G$:
    \begin{align} \label{definition of f_n}
      f_n(h\G) = \vp(g_n\exp(v_n)h\G) - \vp(g_nh\G)
    \end{align} 
    where $v_n\in Lie(H)$ is as in the conclusion of Proposition~\ref{propn: Jacobson-Morozov} applied to the sequence $t_n$.
    The reason for defining such functions is the following
  
  \begin{proposition}
  \label{f_n go to zero implies unipotent invariance}
  	To prove Theorem~\ref{thrm: existence of Ratner sequences in 1-param unipotent subgroups}, it suffices to show that for $\mu_H$ almost every $x\in H\G/\G$, the following holds:
    \[ \frac{1}{N} \sum_{n=1}^N f_n(x) \r 0\]
  \end{proposition}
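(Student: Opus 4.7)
The plan is to rewrite $f_n$ so that the hypothesis translates directly into approximate $u$-invariance of the empirical measures $\nu_N^x := \frac{1}{N}\sum_{n=1}^N \d_{g_n x}$. The key algebraic identity is the conjugation
\[ g_n \exp(v_n) = w_n g_n, \qquad w_n := \exp(Ad(g_n)(v_n)), \]
which gives $f_n(h\G) = \vp(w_n g_n h\G) - \vp(g_n h\G)$. By the conclusion of Proposition~\ref{propn: Jacobson-Morozov}, $w_n \r u$ in $G$ for some fixed non-trivial $Ad$-unipotent element $u$. Since $\vp \in C_c^\infty(G/\G)$ is uniformly continuous, the quantities $\e_n := \sup_{y \in G/\G} |\vp(w_n y) - \vp(u y)|$ tend to $0$.

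Next I would swap $w_n$ for $u$ inside the Ces\`aro average at negligible cost. Uniformly in $x$,
\[ \left| \frac{1}{N}\sum_{n=1}^N \bigl[\vp(w_n g_n x) - \vp(u g_n x)\bigr] \right| \;\leq\; \frac{1}{N}\sum_{n=1}^N \e_n \;\xrightarrow{N\r\infty}\; 0 \]
by the Ces\`aro mean theorem. Combined with the hypothesis $\frac{1}{N}\sum_n f_n(x) \r 0$, this yields, for $\mu_H$-a.e.\ $x$,
\[ \int \vp \circ L_u \; d\nu_N^x \;-\; \int \vp \; d\nu_N^x \;\xrightarrow{N\r\infty}\; 0, \]
where $L_u$ denotes left translation by $u$ on $G/\G$.

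To upgrade this per-$\vp$ statement into genuine $u$-invariance of every weak-$\ast$ limit of $\nu_N^x$, I would fix a countable family $\set{\vp_k} \subset C_c^\infty(G/\G)$ that is dense in $C_c(G/\G)$ in the uniform norm (available since $G/\G$ is second countable and smooth bump functions are dense in $C_c$), apply the preceding step to each $\vp_k$, and intersect the resulting $\mu_H$-conull sets to obtain a single conull $X_0 \subseteq H\G/\G$. For $x \in X_0$ and for any subsequential weak-$\ast$ limit $\l$ of $\nu_N^x$, one has $\l(\vp_k \circ L_u) = \l(\vp_k)$ for all $k$ by weak-$\ast$ convergence applied to the compactly supported functions $\vp_k$ and $\vp_k \circ L_u$, whence $(L_u)_\ast \l = \l$. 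Since $u$ is a non-trivial $Ad$-unipotent element not depending on $x$, this is precisely the defining property of a Ratner sequence for $H$.

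The argument presents no serious obstacle: it is a reduction rather than a computation. The only point requiring care is passing to a single countable dense family of test functions so that the $\mu_H$-null exceptional set is not allowed to depend on $\vp$; this is a standard maneuver and the separability of $C_c(G/\G)$ in the uniform norm makes it routine.
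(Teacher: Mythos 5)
Your proof is correct and takes essentially the same route as the paper: rewrite $f_n$ via the conjugation $g_n\exp(v_n)g_n^{-1}\to u$, use uniform continuity of $\vp$ and Ces\`aro averaging to swap $\exp(Ad(g_n)v_n)$ for $u$ at negligible cost, and conclude $u$-invariance of every weak-$\ast$ limit. The only (minor) difference is that you spell out the passage to a countable dense family of test functions to get a single $\mu_H$-conull set, a point the paper handles implicitly with the phrase ``$\vp$ was an arbitrary function.''
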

  
  \begin{proof}
  	Let $h_n = \exp(v_n)$.
  	By Proposition~\ref{propn: Jacobson-Morozov}, we have $g_n h_ng_n^{-1} \r u$, where $u$ is a non-trivial Ad-unipotent element.
    Since, $\vp$ is uniformly Lipschitz, we have
    \[ \vp(ug_nx) - \vp(g_nh_nx) = \vp(ug_nx) - \vp(g_nh_ng_n^{-1}g_nx)
    = O\left(d(u, g_nh_ng_n^{-1})\right) \]
    where $d(.,.)$ is the right invariant metric on $G$.
    Hence, $|\vp(ug_nx) - \vp(g_nh_nx)| \r 0$ as $n\r\infty$.
    
    Hence, by assumption,
    \begin{align*}
    	\left| \frac{1}{N}\sum_{n=1}^N ( \vp(ug_nx)-\vp(g_nx) ) \right|
        	&\leq \frac{1}{N}\sum_{n=1}^N \left| \vp(ug_nx)-\vp(g_nh_nx) \right| 
            + \left| \frac{1}{N}\sum_{n=1}^N f_n(x) \right| \\
            &\longrightarrow 0
    \end{align*}
    
    But, $\vp$ was an arbitrary function. Thus, any limit point must be invariant by the group generated by $u$.
    	
  \end{proof}
  
  In order to use Proposition~\ref{f_n go to zero implies unipotent invariance}, we shall need the following estimate on the correlation between the functions $f_n$.
  This lemma is an adaptation of the key technical lemma in~\cite{EskinChaika} (Lemma 3.3) to our setting. we shall need the following definition.
  
  \begin{definition} \label{definition of injectivity radius}
  	For any $x\in H\G/\G \cong H/(H\cap\G)$, the injectivity radius at $x$, denoted by $inj_x$ is defined to be the infimum over all $r>0$ such that the map $h\mapsto hx$ is injective on the ball of radius $r$ around identity in $H$.
  \end{definition}

  \begin{lemma}\label{decay of correlations}
  	For all $n\geq m \geq 1$ such that $\norm{Ad(g_m)\vert_\mf{h}}/ \norm{Ad(g_n)\vert_\mf{h}}$ is sufficiently small, the following holds
    \begin{align}
    	\int f_n(h\G)f_m(h\G) \;d\mu_H(h\G) 
        = O\left( \left(\frac{\norm{Ad(g_m)\vert_\mf{h}}}{\norm{Ad(g_n)\vert_\mf{h}}}\right)^{1/2} \right)
    \end{align}
  \end{lemma}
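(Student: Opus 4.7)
The plan is to exploit the $H$-invariance of $\mu_H$, together with the algebraic identity that passes $g_m$ past $h_n$, to reduce the cross correlation $\int f_n f_m\,d\mu_H$ to an integral controlled by the Lipschitz norm of $\varphi$ and the smallness of $v_n$ built into Proposition~\ref{propn: Jacobson-Morozov}.

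First I would \emph{unfold via $H$-invariance}. Writing $h_n = \exp(v_n) \in H$ and using that left multiplication by $h_n^{-1}$ preserves $\mu_H$, a change of variables $h\Gamma \mapsto h_n^{-1}h\Gamma$ in the first summand of $f_n$ gives
\[
\int f_n f_m \, d\mu_H \;=\; \int \varphi(g_n h\Gamma)\bigl[f_m(h_n^{-1}h\Gamma) - f_m(h\Gamma)\bigr] \, d\mu_H(h\Gamma).
\]
This converts the correlation into a measurement of how much $f_m$ oscillates under a small $H$-translation, with $\varphi(g_n \cdot)$ playing the role of a bounded weight.

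Second I would obtain a \emph{pointwise smoothness estimate} for $f_m$ in the $h_n$-direction. The key algebraic identity is $g_m h_n^{-1} = \exp(-\operatorname{Ad}(g_m) v_n)\, g_m$; since $\varphi \in C_c^\infty(G/\Gamma)$ is uniformly Lipschitz for a right-invariant metric, each of the four terms in $f_m(h_n^{-1}h\Gamma) - f_m(h\Gamma)$ is controlled by the displacement of its argument in $G/\Gamma$, which is at most $\|\exp(-\operatorname{Ad}(g_m)v_n) - e\|\ll \|\operatorname{Ad}(g_m)|_\mathfrak{h}\| \cdot \|v_n\|$. Combined with the estimate $\|v_n\| \ll 1/\|\operatorname{Ad}(g_n)|_\mathfrak{h}\|$ from Proposition~\ref{propn: Jacobson-Morozov}, one gets the ``linear'' pointwise bound $|f_m(h_n^{-1}h\Gamma) - f_m(h\Gamma)| \ll \|\operatorname{Ad}(g_m)|_\mathfrak{h}\|/\|\operatorname{Ad}(g_n)|_\mathfrak{h}\|$ on the thick part of $H\Gamma/\Gamma$, and thus the same bound for $\int f_n f_m\,d\mu_H$ there.

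Third, I expect the $1/2$ exponent in the stated inequality to arise from a \emph{thick--thin split} governed by the injectivity radius introduced in Definition~\ref{definition of injectivity radius}. On the thick set $Y_r = \{x : \operatorname{inj}_x \geq r\}$, the estimate from Step~2 is valid because the conjugation $\exp(-\operatorname{Ad}(g_m)v_n)$ of $g_m h\Gamma$ can be realized as a genuine small displacement in $G/\Gamma$ without wrapping. On the thin complement, which has $\mu_H$-measure controlled by a polynomial in $r$ (standard for closed $H$-orbits under the reductive hypotheses of the setup), one uses only the trivial bound $\|f_n\|_\infty \|f_m\|_\infty = O(\|\varphi\|_\infty^2)$. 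Optimizing $r$ against the ratio $\|\operatorname{Ad}(g_m)|_\mathfrak{h}\|/\|\operatorname{Ad}(g_n)|_\mathfrak{h}\|$ yields the square root.

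The main obstacle will be the last step: one needs the injectivity radius of a typical point of $H\Gamma/\Gamma$ to control how large the conjugate $\exp(-\operatorname{Ad}(g_m)v_n)$ can be before the right-invariant Lipschitz estimate on $\varphi$ fails to transfer cleanly from $G$ to $G/\Gamma$, and one needs a quantitative bound on the measure of the set where this fails. The ``sufficiently small'' hypothesis on the ratio in the lemma is precisely what allows the optimal choice of threshold $r$ to remain in a regime where both estimates are meaningful; verifying this balance is the crux of the proof and is where the adaptation of Chaika--Eskin's Lemma~3.3 takes place.
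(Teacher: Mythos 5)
Your first two steps form a valid argument, and in fact a simpler and stronger one than the paper's, but your third step is based on a misconception; let me untangle this. The change of variables $h\Gamma\mapsto h_n^{-1}h\Gamma$ in Step~1 is exactly right, and the algebraic identity $g_mh_n^{-1}=\exp(-\mathrm{Ad}(g_m)v_n)g_m$ in Step~2 (together with the analogous one for $g_mh_mh_n^{-1}$, using that $\mathrm{Ad}(h_m)$ preserves $\mf{h}$ with uniformly bounded norm) gives the pointwise bound
\[
|f_m(h_n^{-1}h\Gamma)-f_m(h\Gamma)| \ll \norm{Ad(g_m)\vert_\mf{h}}\cdot\norm{v_n} \ll \frac{\norm{Ad(g_m)\vert_\mf{h}}}{\norm{Ad(g_n)\vert_\mf{h}}}
\]
\emph{everywhere} on $H\G/\G$, not just on a thick part. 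The Lipschitz estimate for $\vp\in C_c^\infty(G/\G)$ under small left displacements is unconditional: for a right-invariant metric $d_G$ on $G$ one always has $d_{G/\G}(gx\G,x\G)\leq d_G(g,e)$, with no injectivity-radius hypothesis on $x$. So Step~3 is not needed; after Steps~1--2 you already get $\int f_nf_m\,d\mu_H=O(\norm{Ad(g_m)\vert_\mf{h}}/\norm{Ad(g_n)\vert_\mf{h}})$, which is \emph{linear} in the ratio and strictly stronger than the $1/2$-power claimed by the lemma. The $1/2$ does not ``arise from a thick--thin split'' in your argument; you introduced it because you expected it, not because your estimates produce it.

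The paper's route is genuinely different. It does not perform your global change of variables; instead it uses Fubini to replace $\int f_nf_m\,d\mu_H$ by an average of ball averages $\frac{1}{\mu_H(B_H(e,r))}\int_{B_H(e,r)}f_n(hx)f_m(hx)\,d\mu_H(h)$, freezes $f_m$ on each ball with oscillation error $O(\norm{Ad(g_m)\vert_\mf{h}}\,r)$, and bounds the ball average of $f_n$ by the symmetric-difference estimate $O(\mu_H(h_nB_r\triangle B_r)/\mu_H(B_r))=O(d(h_n,e)/r)$ via Proposition~\ref{measure of symmetric difference}. The thick--thin split is forced on them because this last step requires $B_r$ around $x$ to lift isometrically to a ball in $H$, i.e.\ $\mathrm{inj}_x\geq r$; the thin set is discarded by the Benoist--Oh bound $\mu_H(\mathrm{Thin}_r)\ll r^p$. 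Optimizing $r=(d_md_n)^{-1/2}$ balances the two surviving error terms $d_mr$ and $1/(d_nr)$ and is precisely where the $\sqrt{\cdot}$ comes from. Your approach avoids all of that machinery at the price of nothing, so it is worth recording that the lemma actually holds with the linear exponent. You should, however, delete the discussion of injectivity radius and the thick--thin split from your write-up, or explain clearly why it is not needed, since as written Step~3 implies you believe there is an obstruction that does not exist.
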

  
  \begin{proof}
  	Let $v_n$ be as in the definition of the functions $f_n$ and let $h_n = \exp(v_n)$.
    Let $d_n = \norm{Ad(g_n)\vert_\mf{h}}$ and $d_m = \norm{Ad(g_n)\vert_\mf{h}}$. Define
    \begin{align} \label{definition of r}
    	r = \left(\frac{1}{d_md_n}\right)^{1/2}
    \end{align}
    
    Let $B_H(e,r)$ denote the ball of radius $r$ around the identity in $H$. By abuse of notation, we'll use $\mu_H$ to denote the Haar measure on $H$ and on $H\G/\G$.
    
    Let $\psi:H\G/\G \r \R$ be any integrable function.
    Then, by Fubini's theorem and left $H$-invariance of $\mu_H$,
    \begin{align*}
        \int_{H\G/\G} \psi(x)\;d\mu_H(x) =
        	\int_{H\G/\G} \frac{1}{\mu_H(B_H(e,r))} \int_{B_H(e,r)} \psi(hx) \;d\mu_H(h) \;d\mu_H(x)
    \end{align*}
    
    Define the following set
    \[ Thick_r = \set{x\in H\G/\G: inj_x \geq r} \]
    where $inj_x$ denotes the injectivity radius at $x$ in $H/(H\cap\G)$. Let $Thin_r = H\G/\G - Thick_r$.
    Using the structure of Siegel sets, one can show (Lemma 11.2,~\cite{BenoistOh}) that $\mu_H(Thin_r) \ll r^p$, for some $p>0$ as $r\r 0$.
    Hence, it suffices to prove for all $x\in Thick_r$,
    \begin{align} \label{estimate for average on balls}
    	\frac{1}{\mu_H(B_H(e,r))} \int_{B_H(e,r)} f_n(hx)f_m(hx) \;d\mu_H(h) = 
        O\left( \left(\frac{\norm{Ad(g_m)\vert_\mf{h}}}{\norm{Ad(g_n)\vert_\mf{h}}}\right)^{1/2} \right)
    \end{align}

    Let $w\in Thick_r$ be fixed.    
    Let $B_r$ denote the ball of radius $r$ around the $w$ in $H\G/\G$ in the metric induced by the metric on $G$.
    Then, for every $x \in B_r$, there exists some $l\in B_H(e,r)$ such that $x = lw$.    
    Since $\vp \in C_c^\infty(G/\G)$, $\vp$ is uniformly Lipschitz. Thus, we get
    \begin{align*}
    	\vp(g_mh_mx) - \vp(g_mh_mw) = \vp(g_mh_mlw)-\vp(g_mh_mw)
        	= O\left( d(g_mh_mlh_m^{-1}g_m^{-1},e)  \right)   
    \end{align*}
    	
	Since the sequences $d_n, d_m$ are tending to infinity, for all $n,m$ sufficiently large, $r$ will be small enough so that the exponential map is a diffeomorphism from a neighborhood of $0$ in $\mf{h} = Lie(H)$ onto $B_H(e,r)$.
    
    Thus, we can write $l=\exp(v)$ for some $v\in \mf{h}$.  So, we have
    \[ \norm{Ad(g_mh_m)(v)} \leq \norm{Ad(g_m)\vert_\mf{h}} \cdot \norm{Ad(h_m)} \cdot \norm{v} \]
    where for any $g\in G$.
    
    But, since $h_m \r id$ as $m\r\infty$ and since the norm is continuous, for all $m$ sufficiently large, we have $\norm{Ad(h_m)} \ll 1$.
    
    Moreover, since the differential of the exponential map at $0$ is the identity, its Jacobian is $1$ at $0$ and hence, when $r$ is sufficiently small, we have $\norm{v} \ll d(l,e) \leq r$.
    Combining these estimates, we get for all $x\in B_r$,
    \begin{align*} 
    	\norm{Ad(g_mh_m)(v)} = O\left( \norm{Ad(g_m)\vert_\mf{h}} r \right) = O\left( \left(\frac{\norm{Ad(g_m)\vert_\mf{h}}}{\norm{Ad(g_n)\vert_\mf{h}}} \right)^{1/2}\right)
    \end{align*}
    
    But, as before, the exponential map is nearly an isometry near identity. Hence, when $\norm{Ad(g_n)\vert_\mf{h}}$ is sufficiently larger than $\norm{Ad(g_m)\vert_\mf{h}}$, $Ad(g_mh_m)(v)$ will be sufficiently close to $0$ so that $d(\exp(Ad(g_mh_m)(v)),e)\ll \norm{Ad(g_mh_m)(v)}$ up to absolute constants.
    Thus, we get for all $x\in B_r$,
    \[ \vp(g_mh_mx) - \vp(g_mh_mw) = O\left( \left(\frac{\norm{Ad(g_m)\vert_\mf{h}}}{\norm{Ad(g_n)\vert_\mf{h}}} \right)^{1/2}\right)\]
    
    Similarly, we get the same estimate for $\vp(g_mx) - \vp(g_mw)$ for all $x\in B_r$.
    Thus, by definition of $f_m$, we get
    \[ 
     	f_m(x) - f_m(w) = O\left( \norm{Ad(g_m)\vert_\mf{h}} r \right)
     \]
    
    Thus, we get that
    \begin{align} \label{estimate f_m away}
        \frac{1}{\mu_H(B_r)} \int_{B_r} f_n(x)f_m(x)\;d\mu_H(x) 
        = \frac{f_m(w)}{\mu_H(B_r)} \int_{B_r} f_n(x)\;d\mu_H(x)
        + O\left( \norm{Ad(g_m)\vert_\mf{h}} r \right)
    \end{align}
    
    Next, note that by definition of $f_n$ and left-invariance of $\mu_H$,
    \begin{align*}
    	\frac{1}{\mu_H(B_r)}\int_{B_r} f_n(x)\;d\mu_H(x) 
        	&= \frac{1}{\mu_H(B_r)}\int_{h_nB_r} \vp(g_nx)\;d\mu_H(x) 
        				-\frac{1}{\mu_H(B_r)}\int_{B_r} \vp(g_nx)\;d\mu_H(x)\\
                        &=  O\left( \frac{\mu_H( h_nB_r\triangle B_r)}{\mu_H(B_r)}  \right)
    \end{align*} 
    	
   Since $w\in Thick_r$, $B_r$ isometric to $B_H(e,r)$. 
   Hence, for all $r$ sufficiently small, we may apply Proposition~\ref{measure of symmetric difference} below to get
   \begin{align} \label{estimate f_n}
   		\frac{1}{\mu_H(B_r)}\int_{B_r} f_n(x)\;d\mu_H(x)  = O\left( \frac{d(h_n,e)}{r} \right)
   \end{align}  
   
   Note that Proposition~\ref{measure of symmetric difference} requires that $h_n \in B_H(e,r)$.
   To get around this assumption, observe that for $n$ sufficiently large, $h_n =\exp(v_n)$ will be sufficiently close to identity and hence, we have
   		\[ d(h_n,e) \ll \norm{v_n} \]
   
   But, by Proposition~\ref{propn: Jacobson-Morozov}, we have $\norm{v_n} \ll 1/\norm{Ad(g_n)\vert_\mf{h}}$ for all $n$ sufficiently large.
   But, since $\norm{Ad(g_n)\vert_\mf{h}} \geq \norm{Ad(g_m)\vert_\mf{h}}$ by assumption, we have $\norm{Ad(g_n)\vert_\mf{h}} \geq 1/r$.
   Thus, in particular, $h_n$ will be contained in a ball of radius comparable to $r$ for all large $n$, which doesn't affect our estimate.
   
   Moreover, this observation, along with~\eqref{estimate f_n}, imply that 
   \begin{align} 
   		\frac{1}{\mu_H(B_r)}\int_{B_r} f_n(x)\;d\mu_H(x)  = O\left( \frac{1}{r\norm{Ad(g_n)\vert_\mf{h}}} \right)
   \end{align}
   
   Combining this estimate with~\eqref{estimate for average on balls} and~\eqref{estimate f_m away} gives
   \begin{align}
   		\int_{Thick_r} f_n(h\G)f_m(h\G)\;d\mu_H(h\G) = O\left( \left( \frac{\norm{Ad(g_m)\vert_\mf{h}}}{\norm{Ad(g_n)\vert_\mf{h}}} \right)^{1/2} \right)
   \end{align}
   and the conclusion of the lemma follows.
  \end{proof}

  \subsubsection{A measure estimate}
  
  The following estimate was used in the proof of Lemma~\ref{decay of correlations}.
  \begin{proposition} \label{measure of symmetric difference}
  	Let $H$ be a Lie group and let $B_r$ denote a ball of radius $r>0$ around the identity in $H$.
    Then, for all $r>0$ sufficiently small and all $h\in B_r$,
  	\[\frac{\mu_H( hB_r\triangle B_r)}{\mu_H(B_r)} = O\left( \frac{d(h,e)}{r} \right) \]
  	where $\mu_H$ denotes a left-invariant Haar measure on $H$ and $d(.,.)$ denotes a right invariant metric.
  \end{proposition}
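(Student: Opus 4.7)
The plan is to reduce the symmetric-difference estimate to a volume comparison on a thin annulus around the boundary of $B_r$. Since the metric $d$ is right-invariant, for any $x \in H$ we have $d(hx, x) = d(h\cdot xx^{-1}, e\cdot xx^{-1}) = d(h,e)$, and also $d(h^{-1}, e) = d(h, e)$ by the same identity applied with $g = h$. Combining these with the triangle inequality gives the inclusions $hB_r \subseteq B_{r+d(h,e)}$ and $h^{-1}B_r \subseteq B_{r+d(h,e)}$, which in turn yield
\[ hB_r \setminus B_r \subseteq B_{r+d(h,e)} \setminus B_r, \qquad B_r \setminus hB_r \subseteq h\bigl(B_{r+d(h,e)} \setminus B_r\bigr). \]
Applying left-invariance of $\mu_H$ to absorb the factor of $h$ on the right, one obtains
\[ \mu_H(hB_r \triangle B_r) \leq 2 \bigl( \mu_H(B_{r+d(h,e)}) - \mu_H(B_r) \bigr). \]

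It then remains to estimate the volume of the annulus $B_{r+d(h,e)} \setminus B_r$ for small $r$. I would carry this out in an exponential chart $\exp \colon U \subset \mf{h} \to H$ at the identity. In this chart the Haar measure pulls back to a smooth measure whose density at the origin equals the Lebesgue density, and the right-invariant distance $d(\exp(v), e)$ is comparable to $\norm{v}$ for $\norm{v}$ small. These two facts together imply that there is a threshold $r_0 > 0$ and a constant $c_n > 0$ such that $\mu_H(B_\rho) = c_n \rho^n (1 + O(\rho^2))$ for $\rho \leq r_0$, where $n = \dim H$.

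Since $h \in B_r$ we have $d(h,e) \leq r$, so the mean value inequality gives $(r + d(h,e))^n - r^n \leq n(2r)^{n-1} d(h,e)$. Combining this with the volume asymptotics from the previous step yields $\mu_H(B_{r+d(h,e)}) - \mu_H(B_r) = O(r^{n-1} d(h,e))$, while $\mu_H(B_r) \asymp r^n$ from the same asymptotics; dividing gives the claimed bound. The only nontrivial step is the volume comparison for small balls, which requires comparing the right-invariant distance with an auxiliary Euclidean structure in exponential coordinates and which forces the qualifier ``for $r$ sufficiently small'' in the hypothesis.
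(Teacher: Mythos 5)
Your reduction to the annulus $B_{r+d(h,e)}\setminus B_r$ via the inclusions $hB_r \setminus B_r \subseteq B_{r+d(h,e)}\setminus B_r$ and $B_r\setminus hB_r \subseteq h\bigl(B_{r+d(h,e)}\setminus B_r\bigr)$, followed by left-invariance of $\mu_H$, is correct and is a genuine alternative to the paper's argument. The paper instead passes to exponential coordinates, uses the Campbell--Baker--Hausdorff formula to show that $hB_r\triangle B_r$ lies in the exponential image of $(B_{r'}^{\mf{h}} + CX)\triangle B_{r'}^{\mf{h}}$ for a vector $X$ of size $\asymp d(h,e)$, and then invokes the Euclidean estimate $Leb\bigl((B+X)\triangle B\bigr)\ll \norm{X}\,r^{\dim H -1}$ for a ball and its translate. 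Your route avoids CBH entirely and exploits only the two invariances in the hypothesis, which is cleaner.

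The final step as written, however, contains a gap. A two-sided asymptotic $\mu_H(B_\rho)=c_n\rho^n\bigl(1+O(\rho^2)\bigr)$ controls the \emph{value} of $\mu_H(B_\rho)$ but not its \emph{increments}: the difference $\mu_H(B_{\rho+s})-\mu_H(B_\rho)$ equals $c_n\bigl((\rho+s)^n-\rho^n\bigr)$ plus the difference of two error terms, each known only to be $O(\rho^{n+2})$, and when $s\ll\rho^{3}$ that unknown difference can dwarf the main term $O(s\rho^{n-1})$. So the mean-value step does not follow from the expansion; a big-$O$ bound on a function cannot be differentiated. What you actually need is a shell estimate, namely $\mu_H(B_{\rho+s}\setminus B_\rho)\ll s\,\rho^{\dim H-1}$ uniformly for $0\le s\le\rho$ with $\rho$ small. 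This does hold and is easy to prove in the exponential chart: for a right-invariant Riemannian (or smooth Finsler) metric the function $f(v)=d(\exp v,e)$ is smooth on a punctured neighbourhood of $0$ with gradient bounded away from $0$, and the coarea formula bounds $Leb\set{v:\rho\le f(v)<\rho+s}$ by $Cs\,\rho^{\dim H-1}$; transporting by $\exp$ and using that Haar measure is comparable to Lebesgue near the identity gives the shell bound. With this substitution, together with the lower bound $\mu_H(B_r)\gg r^{\dim H}$ (which your asymptotic does furnish), your argument closes and yields the proposition.
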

  
  \begin{proof}
  	Let $\mf{h} = Lie(H)$. Fix a norm on $\mf{h}$ inducing the metric $d$.
    Let $r>0$ be small enough such that the exponential map is a diffeomorphism from a ball around $0$ in $\mf{h}$ onto $B_r$.
    Since the differential of the exponential is the identity at $0$, such ball will have a radius comparable to $r$, denote it by $B_{r'}^{\mf{h}}$
    
    Let $g\in B_r$. Let $X,Y\in \mf{h}$ be such that $h = \exp(X)$ and $g = \exp(Y)$.
    Then, if $r$ is sufficiently small, by the Campell-Baker-Hausdorff formula, there exists some $Z\in \mf{h}$ so that $hg = \exp(Z)$ and
    \[ Z-Y = X + o(\norm{X})  \]
    
    In particular, there is some $C\geq1$ such that 
    $hB_r\subseteq \exp(B_{r'}^{\mf{h}} + CX) $.
    And, hence, we get that
    \begin{align*}
    	hB_r\triangle B_r \subseteq \exp((B_{r'}^{\mf{h}} + CX) \triangle B_{r'}^{\mf{h}})	
    \end{align*}

    Let $Leb$ denote the Lebesgue measure on $\mf{h}$. It is then a standard fact from convex euclidean geometry that
    \begin{align} \label{estimate of measure of symmetric difference}
    	Leb((B_{r'}^{\mf{h}} + CX) \triangle B_{r'}^{\mf{h}}) \ll \norm{X} r^{dim H -1}
    \end{align} 
    where the implicit constants are absolute and depend only on the dimension (see for example~\cite{GroemerConvexGeometry}).
    Here we are using that a ball in the norm on $\mf{h}$ is equivalent to a standard euclidean ball of comparable radius.
    
    Again, since the differential of the exponential is the identity at $0$, the Haar measure on on $H$ near identity is comparable up to absolute constants with the pushforward of the Lebesgue measure under the exponential map.
    
    In particular, one has $\mu_H(B_r) \ll r^{dim H}$. Combining this with~\eqref{estimate of  measure of symmetric difference} gives the desired conclusion.
  \end{proof}

\subsection{Law of Large Numbers}
\label{section: LLN}

This section is dedicated to the proof of Proposition~\ref{law of large numbers proposition} below. The proof follows the same ideas as in the proof of the strong law of large numbers and is very similar to the proof in~\cite{EskinChaika}. The main difference is the need to handle the general growth condition in Theorem~\ref{thrm: existence of Ratner sequences in 1-param unipotent subgroups}.

Recall the definition of the functions $f_n$ from the previous section.
By Proposition~\ref{f_n go to zero implies unipotent invariance}, the following proposition concludes the proof of Theorem~\ref{thrm: existence of Ratner sequences in 1-param unipotent subgroups}.

  \begin{proposition} \label{law of large numbers proposition}
	Under the same hypotheses of Theorem~\ref{thrm: existence of Ratner sequences in 1-param unipotent subgroups}, for $\mu_H$ almost every $x\in H\G/\G$,
    \[ \frac{1}{N} \sum_{n=1}^N f_n(x) \r 0 \]
  \end{proposition}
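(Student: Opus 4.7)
The strategy is to run a classical second-moment proof of the strong law of large numbers, adapted to the weakly dependent setting provided by Lemma~\ref{decay of correlations}. Write $S_N(x) := \sum_{n=1}^N f_n(x)$ and $d_n := \norm{Ad(g_n)\vert_\mf{h}}$. Since $\vp \in C_c^\infty(G/\G)$, the functions satisfy the uniform bound $|f_n| \leq 2\norm{\vp}_\infty$.

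The first task is the $L^2$-variance estimate $\norm{S_N}_{L^2(\mu_H)}^2 = O(N)$. Expanding the square,
\[ \norm{S_N}_{L^2(\mu_H)}^2 = \sum_{n=1}^N \norm{f_n}_{L^2(\mu_H)}^2 + 2 \sum_{1\leq m<n \leq N} \int f_n f_m\,d\mu_H. \]
The diagonal contributes $O(N)$ by uniform boundedness. For the off-diagonal, I combine the growth $d_n \asymp t_n^{d_\mf{h}}$ (extracted from the proof of Proposition~\ref{propn: Jacobson-Morozov}) with the hypothesis $t_n = O(e^{\l n})$ to obtain exponential control on $d_n$ in $n$. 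Then, for $n - m \geq K$ with $K$ a fixed constant, the ratio $d_m/d_n$ is small enough that Lemma~\ref{decay of correlations} applies and gives $\left|\int f_n f_m\,d\mu_H\right| \ll e^{-c(n-m)}$ for some $c>0$. The remaining pairs with $n-m < K$ contribute at most $O(KN) = O(N)$ via the trivial bound $|f_n f_m| \leq 4\norm{\vp}_\infty^2$, and summing the exponentially decaying terms also yields $O(N)$. Hence $\norm{S_N}_{L^2(\mu_H)}^2 = O(N)$.

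With the variance bound in hand, I pass to almost sure convergence along a subsequence by Chebyshev plus Borel--Cantelli. For $N_k = k^2$ and any $\e > 0$,
\[ \mu_H\!\left(\set{x : |S_{N_k}(x)|/N_k > \e}\right) \leq \frac{\norm{S_{N_k}}_{L^2(\mu_H)}^2}{\e^2 N_k^2} = O\!\left(\frac{1}{\e^2 k^2}\right), \]
which is summable in $k$, so the Borel--Cantelli lemma yields $S_{N_k}(x)/N_k \r 0$ for $\mu_H$-almost every $x$. To upgrade to the full sequence, interpolate using uniform boundedness: for $N_k \leq N < N_{k+1}$, since $N_{k+1}-N_k = O(k)$ and $|S_{N_k}| \leq 2\norm{\vp}_\infty N_k$,
\[ \left| \frac{S_N(x)}{N} - \frac{S_{N_k}(x)}{N_k} \right| \leq \frac{2\norm{\vp}_\infty(N_{k+1}-N_k)}{N_k} + \frac{|S_{N_k}(x)|}{N_k}\cdot \frac{N_{k+1}-N_k}{N_k} = O(1/k), \]
and we conclude $S_N(x)/N \r 0$ almost surely.

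The main obstacle is the $L^2$ estimate: one must carefully extract exponential-in-$n$ lower bounds on $d_n$ from the representation theoretic input in Proposition~\ref{propn: Jacobson-Morozov} together with $t_n = O(e^{\l n})$, and handle the threshold beyond which the hypothesis of Lemma~\ref{decay of correlations} kicks in. Once $\norm{S_N}_2^2 = O(N)$ is in place, the passage to almost sure convergence is the routine Chebyshev/Borel--Cantelli/interpolation machinery familiar from Kolmogorov's SLLN.
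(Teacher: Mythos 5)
Your proof takes the same route as the paper: a second-moment estimate on $S_N(f)$ exploiting the decay of correlations from Lemma~\ref{decay of correlations}, followed by Chebyshev, Borel--Cantelli along a polynomially sparse subsequence $N_k$, and an interpolation step using the uniform bound $|f_n|\leq 2\norm{\vp}_\infty$. The only departure is quantitative---by choosing a fixed threshold $K$ for the near-diagonal pairs rather than the paper's $|n-m|<N^{1/2}$, you obtain the sharper variance bound $O(N)$ (the paper gets $O(N^{3/2})$), which lets you take $N_k=k^2$ instead of $k^4$; both choices yield a summable Chebyshev tail and the interpolation step goes through identically.
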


  \begin{proof} 
  For $x\in H\G/\G$ and $N \in \N$, let $S_N(f)(x) =\sum_{n=1}^N f_n(x)$.
  As noted in equation~\eqref{eqn: norm of u_t is polynomial in t}, there exists a natural number $d \geq 1$, depending only on $H$ and $U$, such that
  \[ \norm{Ad(g_n)\vert_\mf{h}} = \norm{Ad(u_{t_n})\vert_\mf{h}} = O(\left(t_n^d\right)   \]
  as $n$ tends to infinity.
  Hence, by assumption, there exists $\l>0$ such that for all $n\geq 1$, we have
  \begin{align} \label{growth rate}
  	\frac{\norm{Ad(g_m)\vert_\mf{h}}}{\norm{Ad(g_n)\vert_\mf{h}}} \ll e^{d \l(m- n )} 
  \end{align}

  Then, we have
  \begin{align*}
    	\int |S_N(f)(x)|^2 \;d\mu_H(x) &= \sum_{1\leq n,m\leq N}\int  f_n(x)f_m(x) \;d\mu_H(x) \\
       	&= O\left(N^{3/2}\right) + 
           	\sum_{|n-m| \geq N^{1/2}} \int  f_n(x)f_m(x) \;d\mu_H(x) \\ 
   \end{align*}

   Here we estimated the number of pairs $(m,n)$ with $|m-n|< N^{1/2}$ using the area between the $2$ lines $m \pm n = N^{1/2}$ in the square $[0,N]^2$.
   
   But, by~\eqref{growth rate}, when $N\gg 1$, for $n \geq m$ such that $|n-m| \geq N^{1/2}$, we have that $\norm{Ad(g_m)\vert_\mf{h}}/\norm{Ad(g_n)\vert_\mf{h}}$ will be sufficiently small so that Lemma~\ref{decay of correlations} applies.
   This implies that for all $N\gg 1$:
   \begin{align*}
   		\frac{1}{N^2}\int |S_N(f)(x)|^2 \;d\mu_H(x) 
        	&= O\left(N^{-1/2}\right) + O \left( e^{\frac{-d\l N^{1/2}}{2}} \right) \\
            &= O\left(N^{-1/2}\right)
   \end{align*}
   
   Let $\e>0$. Then, by the Chebyshev-Markov inequality,
   \begin{align*}
   		\mu_H \left(\set{x: \left|\frac{S_N(f)(x)}{N}\right|>\e}   \right)
         \ll \frac{N^{-1/2}}{\e^2}
   \end{align*}

   For all $k \in \N$, let $N_k = k^{4} $.
   Thus, the above observation shows that the sequence  $N_k^{-1/2}$ is summable.
   Hence, by the Borel-Cantelli lemma, we have
   \begin{align*}
   		\mu_H\left( x: \left|\frac{S_{N_k}(f)(x)}{N_k}\right|>\e \text{ for infinitely many } k  \right) = 0
   \end{align*}
   
   Since $\e$ was arbitrary, by taking a countable sequence $\e_i$ decreasing to $0$, we conclude that for $\mu_H$ almost every $x$, 
   \[ \lim_{k\r\infty} \frac{S_{N_k}(f)(x)}{N_k} = 0 \]
   
   We are left with bootstrapping this conclusion to all sequences, for which we use a standard interpolation argument.
   Let $M_i \r \infty$ be a sequence.
   Observe that for each $M_i \in \N$, there exists some $k_i\in \N$ such that $N_{k_i} \leq M_i \leq N_{k_i+1}$.
   
   Moreover, we have $N_{k_i+1} - N_{k_i} = O(k_i^{3})$.
   Thus, we get that
   \begin{align*}
   		\left| \frac{S_{M_i}(f)(x)}{M_i}  \right|
        &\leq \frac{N_{k_i}}{M_i} \left| \frac{S_{N_{k_i}}(f)(x)}{N_{k_i}} \right| + O(k_i^{-1})
        \xrightarrow{i\r\infty} 0
   \end{align*}
   as desired.
	\end{proof}

\section{Proof of Theorem~\ref{Intro-Pointwise equidistribution for general groups}}
\label{section-general groups}
	
    This section is dedicated to the proof of Theorem~\ref{Intro-Pointwise equidistribution for general groups}.
	Let the notation be the same as in \S~\ref{section unipotent invariance}.
    Recall that a sequence $g_n$ of elements of $G$ is said to be a \textit{Ratner Sequence} if there exists a non-trivial $Ad$-unipotent element
      $u \in G$ such that for $\mu_H$ almost every $x\in G/\G$,
      any limit point of the sequence of empirical measures $\frac{1}{N} \sum_{n=1}^N \d_{g_n x}$ is invariant by $u$.
    
    The existence of such sequences is the content of Theorem~\ref{thrm: existence of Ratner sequences in 1-param unipotent subgroups}.

    \begin{proof}[Proof of Theorem~\ref{Intro-Pointwise equidistribution for general groups}]

    Let $g_n$ be a Ratner sequence for $H$. 
    Let $W$ denote the one-parameter unipotent subgroup generated by an $Ad$-unipotent element $u$ as in the definition of Ratner sequences above.
    
    We will apply Theorem~\ref{Intro-Birkhoff for sequences of transformations} with $X = G/\G$, $\mu = \mu_{G/\G}$, $\nu = \mu_H$ and $T_n=g_n$.
    Let us verify the hypotheses.
    By assumption, we have that
    \[ \frac{1}{N}\sum_{n=1}^N (g_n)_\ast \nu \r \mu \]
    In particular, this implies condition $(1)$ of Theorem~\ref{Intro-Birkhoff for sequences of transformations}.
    Moreover, by definition of Ratner sequences, condition $(2)$ is satisfied, with the transformation $S$ being multiplication by the unipotent element $u$.
    
    Let $\mc{L}$ denote the collection of proper analytic subgroups $L$ of $G$ such that $L\cap \G$ is a lattice.
    Then, $\mc{L}$ is a countable set~\cite{RatnerMeasure}.
    
    For $L\in \mc{L}$, define the following set
    \begin{align*}
    	N(L,W) = \set{g\in G: g^{-1}Wg\subseteq L }
    \end{align*}
    
    Let $\pi: G \r G/\G$ denote the natural projection.
    The set $Z$ appearing in the hypotheses of Theorem~\ref{Intro-Birkhoff for sequences of transformations} will be defined to be
    \begin{align*}
    	Z = \bigcup_{L\in \mc{L}} \pi\left(N(L,W) \right)
    \end{align*}
    
    Then, since $Z$ is a countable union of analytic subvarieties of $G/\G$~\cite{RatnerMeasure}, $Z$ admits a filtration by compact sets.
    Moreover, since $\mc{L}$ is countable, and $\mu_{G/\G}(\pi(N(L,W))) = 0$ for all $L\in \mc{L}$, we have
    \[ \mu_{G/\G}(Z) = 0 \]
    
    Finally, by Ratner's measure rigidity theorem~\cite{RatnerMeasure}, any ergodic $W$ invariant probability measure $\l \neq \mu_{G/\G}$ is supported on $N(L,W)$ (in fact supported on a single closed orbit of a conjugate of $L$) for some $L\in\mc{L}$.
    Thus, all the hypotheses of Theorem~\ref{Intro-Birkhoff for sequences of transformations} are verified and hence the conclusion of Theorem~\ref{Intro-Pointwise equidistribution for general groups} follows.
    
    \end{proof}

\appendix

\section{Symmetric Groups and Ratner Sequences}

	Throughout this section, $G$ is a connected semisimple Lie group with finite center and $H$ is symmetric subgroup of $G$.
	We let $\G$ be an irreducible lattice in $G$ and assume the orbit $H \G$ is closed in $G/\G$ and supports an $H$-invariant probability measure.
    
	We prove a general criterion for a sequence of elements of a Lie group $G$ to contain a Ratner sequence as a subsequence with respect to $H$.
    The precise statement is Theorem~\ref{unipotent invariance} below.    
    The proof of this theorem follows the same lines as the proof of Theorem~\ref{thrm: existence of Ratner sequences in 1-param unipotent subgroups}.
    The only difference being Proposition~\ref{propn: Mautner phenomenon} below which acts as a replacement for Proposition~\ref{propn: Jacobson-Morozov} in the proof of Theorem~\ref{thrm: existence of Ratner sequences in 1-param unipotent subgroups}.
    The arguments in sections~\ref{section: decay of correlations} and~\ref{section: LLN} carry over verbatim to this setting.

\subsection{Structure of Affine Symmetric Spaces}
  Our main tool will be the structure of affine symmetric spaces which we recall here.
  We follow the exposition in~\cite{eskin1993} closely for the material in this section.
  Let $\s: G \r G$ be an involution such that $H$ is the fixed point set of $\s$. 
  Then, $G/H$ is an affine symmetric space.
  By abuse of notation, let $\s$ also denote the differential of $\s$ at identity.
  Let $\mf{g}$ denote the Lie algebra of $G$.
    Then, we have
    \[ \mf{g} = \mf{h} \oplus \mf{p} \]
    where $\mf{h}$ is the eigenspace corresponding to the eigenvalue $1$ of $\s$ and $\mf{p}$ corresponds to the $-1$ eigenspace,
    and $\mf{h}$ is the Lie algebra of $H$.
    
    It is well known (Proposition 7.1.1,~\cite{Schlichtkrull}) that one can find a Cartan involution $\theta$ of $G$ commuting with $\s$. Let $\theta$ also denote its differential at identity.
    Then, similarly $\mf{g}$ splits as
    \[ \mf{g} = \mf{k}\oplus \mf{q} \]
    where $\mf{k}$ (resp. $\mf{q}$) is the $+1$ (resp. $-1$) eigenspace of $\theta$.
    Since $\theta$ is a Cartan involution, $\mf{k}$ is the Lie algebra of a maximal compact subgroup, denote it by $K$.
    
    Now, let $\mf{a}$ be a maximal abelian subspace of $\mf{p}\cap \mf{q}$.
    Then, $\mf{a}$ is the Lie algebra of a maximal abelian subgroup $A$ and the exponential map $\mf{a}\r A$ is a diffeomorphism.

  	Recall that $G$ admits a decomposition of the form $G = KAH$ (See~\cite{Schlichtkrull}, Proposition 7.1.3 or~\cite{eskin1993}, Proposition 4.2).
    Elements of the fiber of the map $ (k,a,h)\mapsto kah$ have the form $(kl,a,l^{-1}h)$ for some element $l\in K\cap H$.
    In particular, the fiber lies in a compact group.

    Consider the adjoint action of $\mf{a}$ on $\mf{g}$.
    There exists a finite subset $\Sigma \subset{\mf{a}^\ast}$ of non-zero elements of the dual of $\mf{a}$ such that $\mf{g}$ splits as
    \[ \mf{g}  = \mf{g}_0 \oplus \bigoplus_{\a \in \Sigma} \mf{g}_\a \]
    such that for all $X\in \mf{a}$ and all $Z\in \mf{g}_\a$,
    \[ ad_{X}(Z) = \a(X)Z \]
    And, for $Z\in \mf{g}_0$, $ad_X(Z) = 0$.
    Recall that the subspaces
    \[ \set{X\in \mf{a}: \a(X) = 0} \]
    for $\a\in \Sigma$ divide $\mf{a}$ into a finite collection of cones, called Weyl Chambers.

   Let $\mf{C}$ be one such Weyl chamber.
   Let $\Sigma^+$ denote the set of $\a\in \Sigma$ such that
   $\a(X) > 0$ for all $X\in \mf{C}$. We call $\Sigma^+$ the set of positive roots relative to $\mf{C}$. Then, $\mf{g}$ splits as follows:
   \[ \mf{g} = \mf{n}^- \oplus \mf{g}_0 \oplus \mf{n}^+ \]
   
   where $\mf{n}^- = \bigoplus_{\a \in \Sigma-\Sigma^+} \mf{g}_\a$ and
   $\mf{n}^+ = \bigoplus_{\a \in \Sigma^+} \mf{g}_\a$.
    
\subsection{Unipotent Invariance}

      We need to fix some notation before stating the main theorem.
      Fix some norm on $Lie(G)$ inducing the metric on $G$ and denote it by $\norm{\cdot}$.
      This norm induces a matrix norm for the Adjoint maps.
      Let $\norm{Ad(g)}$ denote such matrix norm.

      The following is the main theorem of this section.

\begin{theorem} \label{unipotent invariance}
	In the notation above, if $g_n\in G$ is a sequence tending to infinity in $G/H$ and satisfying the following growth condition:
    \begin{enumerate}
    	\item There exists a constant $\l > 0$ such that for all $ n \geq 1$,
        	\[  \norm{Ad(g_n)}=O\left(e^{\l n}\right)  \]
        \item Writing $g_n = k_na_nh_n$, we have that $\norm{Ad(h_n^{-1})}$ is uniformly bounded for all $n$.
    \end{enumerate} 
    Then, $g_n$ contains a Ratner sequence for $H$ as a subsequence.

\end{theorem}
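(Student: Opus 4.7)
The plan is to mirror the proof of Theorem~\ref{thrm: existence of Ratner sequences in 1-param unipotent subgroups}, replacing the Jacobson--Morozov-based construction of Proposition~\ref{propn: Jacobson-Morozov} by a Mautner-style construction tailored to the affine symmetric space structure. Concretely, the aim is to produce, along some subsequence (still denoted $g_n$), vectors $v_n \in \mf{h}$ satisfying
\[ \norm{v_n} = O\!\left( 1/\norm{Ad(g_n)\vert_\mf{h}} \right), \qquad g_n \exp(v_n) g_n^{-1} \xrightarrow{n\r\infty} u, \]
for some nontrivial Ad-unipotent $u \in G$. Once such $v_n$ are in hand, the arguments of \S~\ref{section: decay of correlations} and \S~\ref{section: LLN} carry over verbatim: the decay-of-correlations estimate (Lemma~\ref{decay of correlations}) and the law of large numbers (Proposition~\ref{law of large numbers proposition}) depend only on the norm bound together with the exponential growth hypothesis (1), and Proposition~\ref{f_n go to zero implies unipotent invariance} then delivers the desired $u$-invariance of all weak-$\ast$ limit points.

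The first step is to pass to convenient subsequences. Using the $KAH$ decomposition, write $g_n = k_n a_n h_n$. Compactness of $K$ gives a subsequence along which $k_n \r k_\infty$. Assumption (2) combined with submultiplicativity of the norm yields $\norm{Ad(g_n)} \asymp \norm{Ad(a_n)}$, so the hypothesis that $g_n \r \infty$ in $G/H$ forces $a_n \r \infty$ in $A$. Refine further so that $\log(a_n)/\norm{\log(a_n)}$ converges to an interior point $X_\infty$ of some fixed Weyl chamber $\mf{C} \subset \mf{a}$; in particular, $\a(\log a_n) \r +\infty$ for every $\a \in \Sigma^+$.

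The key Mautner-type construction runs as follows. Because the Cartan involution $\th$ commutes with $\s$, one checks that $\s(\mf{g}_\a) = \mf{g}_{-\a}$ for every root $\a$, while $\s\vert_\mf{a} = -\mathrm{id}$ since $\mf{a} \subset \mf{p}$. Fix a positive root $\a \in \Sigma^+$ and a nonzero $Z \in \mf{g}_\a$ with $w := Z + \s Z \neq 0$; then $w \in \mf{h}$ by construction, and
\[ Ad(a_n) w \;=\; e^{\a(\log a_n)} Z + e^{-\a(\log a_n)} \s Z. \]
Rescaling, $\tilde{v}_n := e^{-\a(\log a_n)} w$ satisfies $Ad(a_n) \tilde{v}_n \r Z \in \mf{n}^+$, which is nilpotent. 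Setting $v_n := Ad(h_n^{-1}) \tilde{v}_n \in \mf{h}$, the identity $Ad(g_n) v_n = Ad(k_n) Ad(a_n) \tilde{v}_n$ gives
\[ g_n \exp(v_n) g_n^{-1} \xrightarrow{n\r\infty} k_\infty \exp(Z) k_\infty^{-1} =: u, \]
a nontrivial Ad-unipotent element of $G$. The bound $\norm{v_n} = O(1/\norm{Ad(g_n)\vert_\mf{h}})$ follows from assumption (2) applied to $v_n = Ad(h_n^{-1}) \tilde{v}_n$, provided $\a$ is chosen to realize (up to constants) the top eigenvalue of $Ad(a_n)\vert_\mf{h}$, which can be arranged after a further subsequence.

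The hard part will be justifying the existence of a positive root $\a \in \Sigma^+$ and a root vector $Z \in \mf{g}_\a$ with $Z + \s Z \neq 0$ whose associated eigenvalue dominates $\norm{Ad(a_n)\vert_\mf{h}}$. This is the non-degeneracy condition playing the role of $U \not\subset Z_G(H)$ in Theorem~\ref{thrm: existence of Ratner sequences in 1-param unipotent subgroups}, and it can fail only in the degenerate case where every $\mf{g}_\a$ lies in $\mf{p}$, i.e.\ $\mf{h} \subseteq \mf{g}_0$ and $H$ centralizes $A$. Ruling this out should use both the escape hypothesis on $g_n$ in $G/H$ and the maximality of $\mf{a}$ in $\mf{p} \cap \mf{q}$, which together force $\mf{h}$ to have nontrivial projection to some root space. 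A routine secondary point is the handling of walls of the Weyl chamber, which is avoided by arranging convergence of $\log(a_n)/\norm{\log(a_n)}$ to an interior point in the reduction step.
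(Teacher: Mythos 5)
Your construction is essentially the one the paper uses (its Mautner-type proposition): symmetrize a root vector to get $w=Z+\s Z\in\mf{h}$, scale by the reciprocal of the top eigenvalue of $Ad(a_n)$, conjugate, use hypothesis (2) to transfer from $k_na_n$ to $g_n$ via $Ad(h_n^{-1})$, and then run the decay-of-correlations and law-of-large-numbers machinery verbatim. The genuine gap is the step you yourself label ``the hard part'': you never prove that some $\a\in\Sigma^+$ admits $Z\in\mf{g}_\a$ with $Z+\s Z\neq 0$, and the route you sketch for it is off target. In fact this is automatic from a fact you already recorded: since $\s\vert_\mf{a}=-\mathrm{id}$, one has $\s(\mf{g}_\a)=\mf{g}_{-\a}$ for every $\a\in\Sigma$, so $Z+\s Z=0$ would force $Z\in\mf{g}_\a\cap\mf{g}_{-\a}=\set{0}$; equivalently, this is the paper's observation that $\mf{n}^+\cap\mf{p}=\set{0}$, so $Z+\s Z$ is a nonzero element of $\mf{h}$ for \emph{every} nonzero root vector $Z$. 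Your ``degenerate case where every $\mf{g}_\a$ lies in $\mf{p}$'' is vacuous for the same reason, and the escape hypothesis on $g_n$ plays no role here --- it is only needed to guarantee $\norm{Ad(a_n)}\r\infty$. For the domination requirement, the correct (and the paper's) device is the one you gesture at: for each $n$ pick $\a_n\in\Sigma^+$ maximizing $\a(\log a_n)$ and pass to a subsequence along which $\a_n\equiv\a$ is constant; then $e^{\a(\log a_n)}\asymp\norm{Ad(a_n)}\geq\norm{Ad(a_n)\vert_\mf{h}}$, which gives both the norm bound on $v_n$ and the nontriviality of the limit.

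A second, smaller defect: your reduction ``refine so that $\log(a_n)/\norm{\log(a_n)}$ converges to an \emph{interior} point of a Weyl chamber'' is not achievable in general --- the limiting direction can lie on a wall (e.g.\ $a_n=\exp(nX)$ with $X$ on a wall), in which case $\a(\log a_n)$ need not tend to infinity for every $\a\in\Sigma^+$. Fortunately it is also unnecessary: with the maximizing-root choice above one has $e^{\a(\log a_n)}\asymp\norm{Ad(a_n)}\r\infty$, hence $e^{-\a(\log a_n)}\s Z\r 0$ and $Ad(a_n)\tilde{v}_n\r Z$, without any interior-point assumption (the paper only fixes a closed chamber containing $\log a_n$). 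With these two repairs --- the trivial non-degeneracy argument via $\s(\mf{g}_\a)=\mf{g}_{-\a}$ and the maximizing-root bookkeeping in place of the interior-point claim --- your argument coincides with the paper's proof, including the unipotence of the limit (eigenvalues of $Ad(\exp(v_n))$ tend to $1$) and the handling of $h_n$ through hypothesis (2).
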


	\begin{remark}
    	\begin{enumerate}
        	\item Passage to a subsequence in the conclusion of Theorem~\ref{unipotent invariance} is needed to insure invariance in the limit by a \textit{single} one-parameter unipotent subgroup.
      This is very important for applying Theorem~\ref{Intro-Birkhoff for sequences of transformations} to prove Theorem~\ref{Intro-Pointwise equidistribution for general groups}.
    		\item The second growth condition in Theorem~\ref{unipotent invariance} makes sense, since the element $h_n$ in the decomposition of $g_n$ is unique up to left multiplication by elements inside the compact group $H\cap K$.
        
      		\item The growth rate of $\norm{Ad(g_n)}$ required by this theorem is not the most general one which works with our techniques.
        It is possible to obtain the same conclusions assuming there exist constants $\l,c >0$ such that $ \norm{Ad(g_n)} =O\left( e^{\l n^c}\right)$.
        \end{enumerate}
	\end{remark}    

\subsection{Expansion Properties of the Adjoint Action}

	We shall need the following lemma regarding the Adjoint action of $G$.
    This lemma exploits the relationship between diagonalizable elements and their associated horospherical subgroups.
    We also make use of the structure of affine symmetric spaces.

  \begin{proposition} \label{propn: Mautner phenomenon}
	Let $g_n$ be as in Theorem~\ref{unipotent invariance}.
    Then, there exists a sequence $v_n \r 0 \in Lie(H)$ satisfying the following for all $n$,
    \[ \norm{v_n} \ll \frac{1}{\norm{Ad(g_n)}} \]
    and such that after passing to a subsequence of the $g_n$'s, we have
    \[ g_n \exp(v_n)g_n^{-1} \r u \neq id \]
    where $u$ is an Ad-unipotent element in $G$.
  \end{proposition}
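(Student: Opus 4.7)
My plan is to leverage the $KAH$ decomposition in place of the Jacobson--Morozov theorem, reducing the problem to a computation on the abelian part $a_n$ in the spirit of Example~\ref{sl(2,R) example}. Write $g_n = k_n a_n h_n$ with $k_n\in K$, $a_n\in A$, $h_n\in H$. By compactness of $K$ I pass to a subsequence on which $k_n \r k\in K$. Since $g_n\r\infty$ in $G/H$ and $K$ is compact, the $A$-part must diverge, so $\norm{\log a_n}\r \infty$ in $\mf{a}$.

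Next I select the relevant expanding root direction. The restricted root set $\Sigma$ is finite and symmetric under $\b\mapsto -\b$, and because $\mf{a}$ is maximal abelian in $\mf{p}\cap\mf{q}$ the only element of $\mf{a}$ annihilated by every root is $0$; consequently $\max_{\b\in\Sigma}\b(\log a_n) \r +\infty$. By pigeonhole on the finite set $\Sigma$, after passing to a further subsequence a single $\a\in\Sigma$ realizes this maximum for every $n$. Since $\s$ acts as $-1$ on $\mf{a}\subset\mf{p}$, $\s$ interchanges $\mf{g}_\a$ and $\mf{g}_{-\a}$; fixing any nonzero $Y\in\mf{g}_\a$, the vector $v_0 := Y + \s(Y)$ lies in $\mf{h}$ and has a non-trivial $\mf{g}_\a$-component. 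I then set
\[
 v_n := e^{-\a(\log a_n)}\,\mathrm{Ad}(h_n^{-1})v_0 \in \mf{h}.
\]

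The verification is immediate. Using that $\mathrm{Ad}(h_n^{-1})$ cancels with $\mathrm{Ad}(h_n)$ when we conjugate, and that $\mathrm{Ad}(a_n)$ scales $Y$ by $e^{\a(\log a_n)}$ and $\s(Y)$ by $e^{-\a(\log a_n)}$,
\[
 g_n\exp(v_n)g_n^{-1} = \exp\!\bigl(\mathrm{Ad}(k_n)(Y + e^{-2\a(\log a_n)}\s(Y))\bigr) \xrightarrow{n\r\infty} \exp(\mathrm{Ad}(k)Y),
\]
which is non-trivial and Ad-unipotent because $Y\in\mf{g}_\a$ is $\mathrm{ad}$-nilpotent and conjugation by $k$ preserves this property. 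For the norm bound, assumption (2) gives $\norm{v_n}\leq e^{-\a(\log a_n)}\norm{\mathrm{Ad}(h_n^{-1})}\norm{v_0} = O(e^{-\a(\log a_n)})$; combining the maximality of $\a$, compactness of $K$, and the standard fact that a Cartan decomposition of the reductive group $H$ forces $\norm{\mathrm{Ad}(h_n)}\asymp\norm{\mathrm{Ad}(h_n^{-1})} = O(1)$, one obtains $\norm{\mathrm{Ad}(g_n)} \ll e^{\a(\log a_n)}$, which yields the required $\norm{v_n}\ll 1/\norm{\mathrm{Ad}(g_n)}$.

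The main obstacle is producing an element of $\mf{h}$ whose conjugation by $a_n$ captures a genuinely expanding root direction; this is exactly what the Jacobson--Morozov triple accomplishes in Proposition~\ref{propn: Jacobson-Morozov} for unipotent $g_n$, and here it is replaced by the symmetric-space identity $\s(\mf{g}_\a) = \mf{g}_{-\a}$, which automatically produces such elements in $\mf{h}$. The remaining delicate point is that both $k_n\r k$ and the selection of a single dominating root $\a$ require passage to subsequences, which is precisely why the conclusion of Theorem~\ref{unipotent invariance} (in contrast to Theorem~\ref{thrm: existence of Ratner sequences in 1-param unipotent subgroups}) only asserts that $g_n$ \emph{contains} a Ratner subsequence.
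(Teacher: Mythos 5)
Your proof is correct and follows essentially the same route as the paper: apply the $KAH$ decomposition, isolate a dominating restricted root $\alpha$ along a subsequence, and exploit the identity $\sigma(\mf{g}_\a)=\mf{g}_{-\a}$ to produce $v_0 = Y+\sigma(Y)\in\mf{h}$ whose $\mf{g}_\a$-component is expanded by $Ad(a_n)$. The only (cosmetic) differences are that you normalize $v_n$ by $e^{-\a(\log a_n)}$ and fold $Ad(h_n^{-1})$ into the definition from the start, while the paper normalizes by $\norm{Ad(g_n)}$, handles the $h_n$-factor in a final reduction, and does not bother to pass $k_n\to k$ since it only needs a limit along some subsequence.
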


   We will need the following fact for the proof of Proposition~\ref{propn: Mautner phenomenon}.
  
  \begin{lemma} (Lemma 3,~\cite{Mozes1992}) \label{adjoint action is proper}
  	If $G$ is semisimple over $\R$ with finite center, then the Adjoint representation $Ad: G \r GL(\mf{g})$ is a proper map.
  \end{lemma}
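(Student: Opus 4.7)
The plan is to factor the adjoint map as
\begin{equation*}
G \xrightarrow{\pi} G/Z(G) \xrightarrow{\overline{Ad}} GL(\mf{g}),
\end{equation*}
where $\pi$ is the quotient by $\ker(Ad) = Z(G)$ and $\overline{Ad}$ is the induced injective Lie group homomorphism. Because $Z(G)$ is finite by hypothesis, $\pi$ is a finite covering map and hence automatically proper, so properness of $Ad$ reduces to properness of $\overline{Ad}$, which in turn reduces to showing both that $Ad(G)$ is closed in $GL(\mf{g})$ and that $\overline{Ad}$ is a homeomorphism onto this image.

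For connected $G$ (the case of interest in the paper), $Ad(G) = Int(\mf{g})$, the group of inner automorphisms of $\mf{g}$. The full automorphism group $Aut(\mf{g})$ is cut out of $GL(\mf{g})$ by the polynomial equations $\phi[X,Y] = [\phi X, \phi Y]$, so it is a closed algebraic subgroup; the identity component of any closed subgroup of a Lie group is itself closed in the ambient group, so $Aut(\mf{g})^0$ is closed in $GL(\mf{g})$. The essential use of semisimplicity is the identification $Int(\mf{g}) = Aut(\mf{g})^0$: by Whitehead's first lemma every derivation of a semisimple Lie algebra is inner, so $\mathrm{ad}(\mf{g}) = \mathrm{Der}(\mf{g})$, which is the Lie algebra of $Aut(\mf{g})^0$; since $Int(\mf{g})$ is connected by construction, this forces equality and in particular closedness.

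Given closedness of $Int(\mf{g}) \subset GL(\mf{g})$, Cartan's closed subgroup theorem endows $Int(\mf{g})$ with the structure of an embedded Lie subgroup of $GL(\mf{g})$ of dimension $\dim\mathrm{ad}(\mf{g}) = \dim\mf{g} - \dim\mf{z}(\mf{g}) = \dim\mf{g}$. Then $\overline{Ad} : G/Z(G) \r Int(\mf{g})$ is a Lie group homomorphism between connected Lie groups of the same dimension, whose differential at the identity is the canonical isomorphism $\mf{g}/\mf{z}(\mf{g}) \r \mathrm{ad}(\mf{g})$; by the inverse function theorem and translation equivariance it is a local diffeomorphism, and together with injectivity it is a diffeomorphism onto $Int(\mf{g})$. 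Composing with the closed embedding $Int(\mf{g}) \hookrightarrow GL(\mf{g})$ makes $\overline{Ad}$ a closed embedding, hence proper. I expect the only genuinely non-formal ingredient to be the identification $Int(\mf{g}) = Aut(\mf{g})^0$ via Whitehead's lemma, where semisimplicity is crucially needed; the remainder is a routine assembly of standard Lie-theoretic facts. Without semisimplicity the argument breaks down in precisely this step — for instance, for nilpotent or solvable $\mf{g}$ the image $Int(\mf{g})$ may sit non-closedly inside $Aut(\mf{g})^0$, destroying properness even when the center is finite.
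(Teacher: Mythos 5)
Your argument is correct. Be aware, however, that the paper contains no proof of this lemma to compare against: it is quoted directly from~\cite{Mozes1992} (Lemma 3) and used as a black box in the proof of Proposition~\ref{propn: Mautner phenomenon}. Your write-up is the standard self-contained derivation, and all the steps check out: $\pi\colon G\r G/Z(G)$ is proper because $Z(G)$ is finite; $\ker(Ad)=Z(G)$ and $Ad(G)=Int(\mf{g})$ hold for \emph{connected} $G$ (the lemma as stated omits connectedness, but the appendix assumes it, and you flag this correctly); semisimplicity enters exactly where you say it does, namely through $\mathrm{Der}(\mf{g})=ad(\mf{g})$, which identifies $Int(\mf{g})$ with the closed subgroup $Aut(\mf{g})^0\subset GL(\mf{g})$, and through $\mf{z}(\mf{g})=0$, which makes the differential of $\overline{Ad}$ an isomorphism so that the bijective local diffeomorphism $G/Z(G)\r Int(\mf{g})$ is a diffeomorphism onto a closed subgroup, hence a proper map into $GL(\mf{g})$; properness of $Ad$ then follows since a composition of proper maps is proper. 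The only quibble is your closing remark: in the solvable case the cleaner statement of what fails is that $Ad(G)$ need not be closed in $GL(\mf{g})$ (and the finite-center hypothesis alone does not save this), rather than non-closedness ``inside $Aut(\mf{g})^0$''; but this aside plays no role in the argument.
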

  
  We are now ready for the proof.
  \subsubsection{Proof of Proposition~\ref{propn: Mautner phenomenon}}

     Write $g_n = k_na_nh_n$. Then, by passing to a subsequence of $g_n$, we may assume that there exists a single Weyl chamber $\mf{C}$ such that $a_n = \exp(X_n)$ and $X_n \in \mf{C}$ for all $n$.
     Let $\Sigma^+$ be a set of positive roots associated with $\mf{C}$.
     
     First, we'll assume that $g_n \in KA$ and write $g_n = k_na_n$.
     Note that the map $Ad: G \r GL(\mf{g})$ is proper by Lemma~\ref{adjoint action is proper}.
     In particular, by assumption, since $g_n \r \infty$, we have
     \[ \norm{Ad(g_n)} \r \infty  \]

     We claim that the image of $\mf{h}$ under the projection onto $\mf{n}^+$ is non-zero.
     To see this, note that given $X,Y \in \mf{p}$, we have that
     \[ \s(ad_X(Y)) = ad_{\s(X)}(\s(Y)) = ad_X(Y) \]
     Hence, $ad_X(Y) \in \mf{h}$.
     On the other hand, if $X\in \mf{a} \subseteq \mf{p}$ and $Y \in \mf{g}_\a$ for some $\a\neq 0\in \Sigma$,
     then $ad_X(Y) \in \mf{g}_\a$.
     In particular, this implies 
     \[ \mf{n}^+ \cap \mf{p} = \set{0} \]
     Thus, given any $X\neq 0\in \mf{n}^+$, the element $X + \s(X)\neq 0$ and is $\s$ invariant and hence belongs to $\mf{h}$.

     Next, note that since $\s(X) = -X$ for all $X\in \mf{a}$, we have $\s(\mf{n}^+) = \mf{n}^-$.
     Thus, in particular, for any $v\in \mf{n}^+$,
     \begin{align} \label{action on n+ and n-}
          \norm{Ad(g_n)(v)} \r \infty,   Ad(g_n)(\s(v)) \r 0
     \end{align} 

     Let $V = \set{v_\a \in \mf{g}_\a: \norm{v_\a} = 1, \a\in \Sigma^+}$ be fixed.
     For each $n$, let $v_{\a_n} \in V$ be such that 
     	\[\a_n(X_n) = \max\set{\a(X_n): \a\in \Sigma^+} \]
     where $X_n\in\mf{C}$ was such that $a_n = \exp(X_n)$.
     Now, for each $n$, let
     \begin{align} \label{definition of v_n}
     	v_n = \frac{v_{\a_n}+\s(v_{\a_n})}{\norm{Ad(g_n)}} 
     \end{align} 
     Then, for all $n$, $v_n\neq 0$ in $\mf{h}$ and satisfies
     \[ \norm{v_n} \ll \frac{1}{\norm{Ad(g_n)}}  \]
     Moreover, by the standard identity $Ad(\exp) = \exp(ad)$ and by~\ref{action on n+ and n-}, 
     \[ Ad(g_n)(v_n) =  \frac{e^{\a_n(X_n)}}{\norm{Ad(g_n)}}Ad(k_n)(v_{\a_n}) + o(1) \]
     
     By compactness of $K$, we have $ \norm{Ad(g_n)} \ll \norm{Ad(a_n)} $.
     But, by our choice of $\a_n$, $e^{\a_n(X_n)}$ is the largest eigenvalue of $Ad(a_n)$ and $Ad(a_n)$ is diagonalizable.
     Thus, $ e^{\a_n(X_n)}/\norm{Ad(a_n)} = O(1)  $ and so we get
     \[ \norm{v_{\a_n}} \ll \norm{Ad(g_n)(v_n)} \leq \norm{Ad(g_n)} \norm{v_n} \ll  \norm{v_{\a_n}} \]
      Hence, by passing to a subsequence, we get that
      \[ g_n\exp(v_n)g_n^{-1} = \exp(Ad(g_n)(v_n)) \r u \neq id \]

      Since $v_n \r 0$, we have that $\exp(v_n) \r id$. Hence, all the eigenvalues of $Ad(\exp(v_n))$ converge to $1$.
      Since conjugation doesn't change eigenvalues, we get that $u$ must be an $Ad$-unipotent element, which finishes the proof in the case $g_n \in KA$.

      For the general case, by $KAH$ decomposition, we write $g_n = k_na_nh_n$.
      Then, we can find $v_n\in \mf{h}$ as above such that $Ad(k_na_n)(\exp(v_n)) \r u \neq id$.
      Thus, the elements $w_n = h_n^{-1} v_n h_n$ will satisfy $Ad(g_n)(\exp(w_n)) = Ad(k_na_n)(\exp(v_n)) \r u$.
      By our assumption on the boundedness of $\norm{Ad(h_n^{-1})}$, we get
      \[ \norm{w_n} \leq \norm{Ad(h_n^{-1})} \norm{v_n} \ll \frac{1}{\norm{Ad(g_n)}} \]
      Hence, the sequence $w_n$ satisfies the conclusion of the Proposition.

\bibliography{bibliography}{}
\bibliographystyle{math}

\end{document}